\newcommand{\Mdef}[2]{\newcommand{#1}{\relax \ifmmode #2 \else $#2$\fi}}
\newcommand{\oline}{\overline}
\newtheorem{theorem}{Theorem}[section]
\newtheorem{lemma}[theorem]{Lemma}
\newtheorem{prop}[theorem]{Proposition}
\newtheorem{definition}[theorem]{Definition}
\newtheorem{remark}[theorem]{Remark}
\numberwithin{equation}{section}
\DeclareMathOperator{\rank}{rank}
\DeclareMathOperator{\depr}{depr}
\Mdef{\F}{\mathbb{F}}
\Mdef{\N}{\mathbb{N}}
\Mdef{\R}{\mathbb{R}}
\Mdef{\Z}{\mathbb{Z}}
\Mdef{\Q}{\mathbb{Q}}
\Mdef{\A}{\mathbb{A}}
\Mdef{\C}{\mathbb{C}}
\Mdef{\One}{\mathds{1}}
\Mdef{\RP}{\mathbb{R}\text{P}}
\newcommand{\ve}{\varepsilon}
\title[Canonical frames for bracket generating rank $2$ distributions which are not Goursat] {Canonical frames for bracket generating rank $2$ distributions which are not Goursat}
\date{\today}
\thanks{ I.\ Zelenko is partly supported by NSF grant DMS 2105528 and Simons Foundation Collaboration Grant for Mathematicians 524213.}
\author{Nicklas Day}
\address{Nicklas Day\\
	Department of Mathematics\\
	Texas A\&M University\\
	College Station\\
	Texas \ 77843\\
	USA}
\email{ncday@tamu.edu}
\urladdr{\url{https://sites.google.com/tamu.edu/nicklasday/home}}
\author{Igor Zelenko \orcidlink{0000-0001-7900-2567}} 
\address{Igor Zelenko\\
         Department of Mathematics\\
         Texas A\&M University\\
         College Station\\
         Texas \ 77843\\
         USA}
\email{zelenkotamu@tamu.edu}
\urladdr{\url{http://www.math.tamu.edu/~zelenko}}
\begin{document}
\subjclass[2020]{58A30, 58A17, 34H05, 37J60}
\keywords{distributions (subbubdles of tangent bundles), canonical frames, symmetries, Goursat distributions, abnormal extremals.}
\begin{abstract}
We complete a uniform construction of canonical absolute parallelism for bracket generating rank $2$ distributions with $5$-dimensional cube on $n$-dimensional manifold with $n\geq 5$ by showing that the condition of maximality of class that was assumed previously by Doubrov-Zelenko for such a construction holds automatically at generic points. This also gives analogous constructions in the case when the cube is not $5$-dimensional but the distribution is not Goursat through the procedure of iterative Cartan deprolongation. This together with the classical theory of Goursat distributions covers in principle the 
local geometry of all bracket generating rank 2 distributions in a neighborhood of generic points. As a byproduct, for any $n\geq 5$ we describe the maximally symmetric germs among bracket generating rank $2$ distributions with $5$-dimensional cube, as well as among those which reduce to such a distribution under a fixed number of Cartan deprolongations. Another consequence of our results on maximality of class is for optimal control problems with constraint given by a rank $2$ distribution with $5$-dimensional cube: it implies that for a generic point $q_0$ of $M$, there are plenty abnormal extremal trajectories of corank $1$ (which is the minimal possible corank) starting at $q_0$. The set of such points contains all points where the distribution is equiregular.

\end{abstract}
\maketitle

\section{Main results}
This paper is devoted to the proof of the existence of a canonical absolute parallelism and to the characterization of maximally symmetric models for bracket generating rank $2$ distributions that satisfy a natural condition of maximal growth for iterated Lie brackets of length at most $3$ of their sections with further consequences to rank $2$ distributions which are not Goursat.

A rank $l$ vector distribution $D$ on an $n$-dimensional
manifold $M$ or an $(l,n)$-distribution (where $l<n$) is a subbundle of the tangent bundle $TM$ with $l$-dimensional fibers. The \emph{weak derived flag at $p\in M$} of the distribution $D$ is the flag $\{D^{i}\}_{i=1}^\infty$ defined by
\begin{equation}
\label{week_derived}
    D^{1}(q) = D(q)\quad \text{and}\quad D^{i}(q) = D^{i-1}(q)+[D,D^{i-1}](q)\ \text{for } i>1.
\end{equation}
The space $D^i(q)$ is called the \emph{$i$th power} of the distribution $D$ at point $q$. In particular, $D^2(q)$ (respectively, $D^3(q)$) is called the square (respectively, the cube) of the distribution $D$ at $q$. A distribution $D$ is called \emph{bracket generating} if for every $q$, $D^k(q)$ coincides with the whole tangent space $T_q M$ for sufficiently large $k$. The tuple $(\dim D(q). \dim D^2(q), \ldots \dim D^i(q), \cdots)$ is called the \emph{small growth vector of the distribution $D$ at the point $q$.} The distribution $D$ is called \emph{equiregular} at a point $q_0$ if there exists a neighborhood $U$ of $q_0$ such that the small growth vector of $D$ is the same for all $q\in U$. If $D$ is bracket generating, the set of points at which $D$ is equiregular, is generic.

Elementary counting implies that for a rank $2$ distribution $D$, $\dim D^3(q)$ is at most $5$. One of the main results of the paper can be formulated as follows:
\begin{theorem}
\label{mainthm}
For any bracket generating rank $2$ distribution $D$ on an $n$-dimensional manifold $M$, $n>5$, with 
\begin{equation}
\label{D^3_eq}
\rank D^3=5
\end{equation}
at a generic point, the following statements hold:
\begin{enumerate}
\item One can assign to $D$ a canonical frame on the $(2n-1)$-dimensional bundle over a neighborhood of generic point of $M$ which implies that the group of symmetries of $D$ is at most $(2n-1)$-dimensional;
\item Any $(2,n)$-distribution with $(2n-1)$-dimensional group of symmetries is locally equivalent to the distribution associated with the Monge equation 
\begin{equation}
    \label{Monge}
    z'(x)=\bigl(y^{(n-3)}(x)\bigr)^2, 
\end{equation}
or equivalently, with a rank $2$ distribution on ${\mathbb R}^n$ with coordinates $(x,y_0,\ldots, y_{n-3},z_0)$ given by the intersection of the annihilators of the following $n-2$ one-forms:
\begin{equation}
 \label{Pfaff}
 \begin{aligned}
~&dy_i-y_{i+1} dx , \,\,0\leq i\leq n-4,\\
 ~&dz-y_{n-3}^2dx.
\end{aligned}
\end{equation}
\end{enumerate}
\end{theorem}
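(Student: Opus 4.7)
The plan is to use the Doubrov-Zelenko framework to reduce part (1) to their maximality of class hypothesis holding generically, and then to identify \eqref{Pfaff} in part (2) as the flat model of the resulting absolute parallelism.

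For part (1), the starting observation is that when $\rank D^3 = 5$ the annihilator $(D^2)^\perp \subset T^*M$ is a rank $n - 3$ subbundle whose nonzero locus carries a canonical characteristic line field, coming from the restriction of the tautological symplectic form on $T^*M$; its integral curves project to corank-$1$ abnormal extremals of $D$. Doubrov-Zelenko use this characteristic flow as the first input to a Tanaka-style normalization that produces an absolute parallelism on a $(2n-1)$-dimensional reduction of a principal frame bundle over $M$, with the maximality of class hypothesis ensuring that the final structure group collapses to a single point. Thus part (1) is finished once maximality of class is verified at generic points.

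The genericity step is the central new contribution. In a local frame adapted to the small growth with $\rank D^3 = 5$, the class invariant can be written as a polynomial expression in the Taylor coefficients of $D$ at a point; it therefore suffices to show this polynomial is not identically zero as a function on the relevant jet space, and this is checked on the Monge model \eqref{Pfaff} by direct computation of the characteristic flow on $(D^2)^\perp$ in the explicit coordinates $(x, y_0, \ldots, y_{n-3}, z_0)$. Because the polynomial depends smoothly on the jet of $D$, its zero locus on any particular distribution is a proper closed subset of the equiregular locus, yielding an open dense set of points at which maximality of class holds.

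For part (2), the canonical frame produced in part (1) is an $e$-structure on a $(2n-1)$-dimensional bundle, so any infinitesimal symmetry of $D$ lifts uniquely to a frame-preserving vector field determined by its value at one point, giving $\dim \Aut(D) \leq 2n - 1$. Equality forces all structure functions of the canonical frame to be constant, and Cartan's rigidity for flat $e$-structures then implies that any such $D$ is locally equivalent to a single model with prescribed structure constants. A direct computation identifies this flat model with \eqref{Monge} by exhibiting a $(2n-1)$-dimensional symmetry algebra acting transitively on its canonical frame bundle: translations in $x, y_0, \ldots, y_{n-3}, z$, a two-parameter scaling group preserving the Pfaffian system \eqref{Pfaff}, and a family of prolongation-type symmetries shifting the jet variables by polynomials in $x$ of controlled degree.

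The principal obstacle I expect is the explicit identification and non-vanishing check of the class invariant on \eqref{Pfaff}. Doubrov-Zelenko's procedure is iterative, and extracting a closed-form polynomial expression for the class in terms of a local frame requires carefully tracking how each stage of the Tanaka reduction transforms the underlying symplectic-geometric data on $(D^2)^\perp$. A secondary difficulty is assembling the symmetries of \eqref{Monge} into a Lie algebra of the correct dimension acting transitively on the bundle rather than merely on $M$, since only such transitivity makes the upper bound in part (1) sharp.
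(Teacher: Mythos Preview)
Your overall architecture is right: Theorem~\ref{mainthm} is indeed Doubrov--Zelenko's Theorems~1 and~3 from \cite{doubrov2009local} minus their maximal-class hypothesis, so the whole content reduces to proving maximality of class at generic points. Part~(2) is likewise already in \cite{doubrov2009local} once part~(1) is in hand, so your sketch there is fine (though it reproves what is cited).

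The genuine gap is in your genericity argument for maximal class. Showing that the class invariant is a non-identically-vanishing polynomial on jet space---by evaluating on the Monge model---only proves that a \emph{generic germ of distribution} has maximal class. That statement was already known (Proposition~3.4 of \cite{Zelenko_2006}). What Theorem~\ref{max_class_conjecture} asserts, and what Theorem~\ref{mainthm} needs, is that \emph{every} bracket-generating $D$ with $\rank D^3=5$ has maximal class at a generic point \emph{of its own base manifold}. Your step ``its zero locus on any particular distribution is a proper closed subset'' does not follow: nothing prevents the jet map $q\mapsto j^k_qD$ from landing entirely inside the zero locus of your polynomial. Concretely, you have not excluded the possibility of a distribution with constant class $m$ for some $1<m<n-3$; on such a $D$ your polynomial would vanish identically on $M$.

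The paper closes exactly this gap by a structural argument on $\mathbb{P}T^*M$: assuming constant class $m>1$, one builds an auxiliary distribution $E=\mathcal{J}_{(m-1)}$ on $\mathcal{R}_m$, computes its weak derived flag using the involutivity conditions of the $V_i$'s, and shows via repeated Jacobi identities that $E^{2m+1}$ is involutive. Since $E$ is bracket generating (it inherits this from $D$), one gets $E^{2m+1}=T\mathcal{R}_m$, and a rank count forces $m=n-3$. This argument is uniform in the Tanaka symbol of $D$ and does not pass through the jet space at all---which is precisely why it succeeds where jet-space genericity cannot. (A minor separate point: your early claim that the characteristic integral curves ``project to corank-$1$ abnormal extremals'' is what you are trying to prove, not an input; corank~$1$ is equivalent to maximal class via \eqref{class_corank_ineq_3}.)
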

Note that for $n>5$ the infinitesimal symmetry algebra of the distribution associated with the Monge equation \eqref{Monge} is isomorphic to the natural semidirect sum of $\mathfrak{gl}_2(\R)$ with the $(2n-5)$-dimensional Heisenberg algebra
%$\mathfrak{heis}_{2n-5}$
(for details, see \cite{doubrov2009local} or \cite{Day2025}).

\'{E}. Cartan proved an analogue of Theorem \ref{mainthm} for the case $n = 5$ \cite{FiveVariables}: the dimension of the bundle in item~(1) and the corresponding upper bound for the dimension of the symmetry algebra is 14 (equal to the dimension of the exceptional Lie algebra $G_2$). The maximally symmetric model in this case is given by equation~\eqref{Monge} (the Cartan–Hilbert equation) or, equivalently, by the Pfaffian system~\eqref{Pfaff} with $n = 5$, and its infinitesimal symmetry algebra is isomorphic to the split real form of $G_2$.

Theorem~\ref{mainthm} strengthens Theorems 1 and 3 of \cite{doubrov2009local} {\bf by removing an additional assumption that the distribution $D$ is of so-called maximal class} (see Definitions \ref{class_def} and \ref{max_class_def} in Section~\ref{sectionSymp} for the precise geometric definition.\footnote{A shorter optimal control description of this notion of maximal class via minimal corank of abnormal extremals is given by the conclusion of Theorem \ref{corank1_abnormal_thm} below, if one takes into account Remark \ref{strength_corank_rem}.}
Thus, Theorem~\ref{mainthm} is a direct consequence of those theorems from \cite{doubrov2009local} and of the following result, which forms the main technical core of the present paper:

\begin{theorem}
\label{max_class_conjecture}
    Any bracket generating rank $2$ distribution with $5$-dimensional cube is of maximal class at a generic point.
%\footnote{{\color{red}One should note that a distribution is bracket generating at all points where it has maximal class}} 
    The set of such points contains all points where the distribution is equiregular.
\end{theorem}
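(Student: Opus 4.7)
The plan is to work locally near an equiregular point $q_0$. The definitions of class and maximal class (Definitions \ref{class_def} and \ref{max_class_def}) alluded to in Section \ref{sectionSymp} are formulated in terms of the symplectic geometry of the annihilator $(D^2)^\perp\subset T^*M$ and the induced Jacobi-type curves attached to abnormal extremals. Via the reformulation in Theorem \ref{corank1_abnormal_thm} and the accompanying footnote, maximality of class is equivalent to the existence of abnormal extremal trajectories of corank $1$ (the minimal possible value) through $q_0$ for a Zariski-dense set of initial covectors in $(D^2)^\perp|_{q_0}$. Since the equiregular locus is open and dense in $M$ for any bracket generating distribution, proving the statement at \emph{every} equiregular point is strictly stronger than proving it generically, so it suffices to fix such a $q_0$ and argue locally there; the generic statement and the second clause of the theorem will then both follow at once.

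At such a $q_0$ the small growth vector begins $(2,3,5,\ldots)$. I would fix a local frame $X_1,X_2$ of $D$, set $X_3=[X_1,X_2]$, $X_4=[X_1,X_3]$, $X_5=[X_2,X_3]$, and complete this to a local frame $X_1,\ldots,X_n$ adapted to the weak derived flag. Introducing the dual fiber coordinates on $(D^2)^\perp$, I would then write out the pullback of the canonical symplectic form on $T^*M$, identify its characteristic directions explicitly in terms of the structure functions $c_{ij}^{k}$ of the frame, and match this with the definitions in Section \ref{sectionSymp}. The content of maximal class becomes the nondegeneracy, for a generic covector $\lambda\in (D^2)^\perp|_{q_0}$, of a certain bilinear pairing built from the $c_{ij}^{k}$; equivalently, the non-vanishing on the fiber $(D^2)^\perp|_{q_0}$ of an explicit polynomial $P_{q_0}(\lambda)$ whose coefficients involve the second-order brackets of $X_1,X_2$ with $X_4,X_5$ modulo $D^3$.

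The heart of the argument, and the main obstacle, is to show that $P_{q_0}$ is not identically zero whenever $\rank D^3=5$ and $D$ is equiregular. I would argue by contradiction: assuming $P_{q_0}\equiv 0$, the vanishing of its leading coefficients produces a purely algebraic identity among the iterated brackets $[X_i,[X_j,X_k]] \pmod{D^3}$. Combining this identity with the Jacobi identity and with the equiregular constraint $\rank D^3=5$, one should be forced into the conclusion $[D,D^3]\subseteq D^3$ in a neighborhood of $q_0$, contradicting the bracket generating hypothesis for $n>5$. For $n=5$ the assertion is Cartan's classical result and provides a sanity check on the algebraic setup. The bookkeeping in this reduction, in particular controlling how covectors annihilating $D^2$ pair against the relevant structure functions and separating out the contributions coming from $X_4$ versus $X_5$, is where I expect the main technical difficulty to concentrate; once it is carried out, the rest is linear algebra and the theorem follows.
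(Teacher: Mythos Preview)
Your proposal has a genuine and fundamental gap. The polynomial $P_{q_0}(\lambda)$ you describe---built from ``second-order brackets of $X_1,X_2$ with $X_4,X_5$ modulo $D^3$''---detects at most whether the class exceeds~$1$, not whether the class is maximal. Maximal class means $\mathcal{J}^{(n-3)}(\lambda)=H(\lambda)$, i.e.\ the flag built by iterated Lie derivatives along the characteristic direction $\mathcal{C}$ grows by one at each of $n-3$ steps. The genuine polynomial condition for this involves iterated brackets of length up to the nilpotency step of $D$, not just length-$4$ brackets landing in $D^4/D^3$. Correspondingly, the contradiction you derive, $[D,D^3]\subseteq D^3$, is exactly the characterization of \emph{minimal} class (Lemma~\ref{lemma: minimal class}); ruling it out only shows $m(q_0)\geq 2$, which is already known and far from the goal $m(q_0)=n-3$.

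More broadly, your strategy is precisely the direct-computation approach that the paper's introduction says was pursued for two decades without success: attempting to verify the flag condition~\eqref{geod_flag} in terms of structure functions on $M$ runs into a combinatorial explosion as $n$ grows, and there is no known way to organize the resulting Jacobi-identity bookkeeping to force the full chain of nondegeneracies. The paper's argument is entirely different in spirit: it works upstairs on $\mathcal{R}_m\subset\mathcal{M}$, assumes the class is a constant $m>1$, and studies the auxiliary distribution $E=\mathcal{J}_{(m-1)}$. By exploiting the involutivity relations in Proposition~\ref{prop: involutivity} and a delicate cascade of Jacobi-identity manipulations (Lemmas~\ref{lemma (i)}--\ref{lemma (vi)}), it shows $E^{2m+1}$ is involutive; since $E$ is bracket generating this forces $E^{2m+1}=T\mathcal{R}_m$, and a rank count yields $m=n-3$. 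The equiregular statement is then handled separately via the Tanaka symbol and the flat model. Your proposal never engages with this mechanism and would not, as written, close the gap between class $\geq 2$ and class $=n-3$.
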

%We conjecture that Theorem \ref{max_class_conjecture} is valid at any point—that is, the phrase ``at a generic point"  can be omitted—although this remains beyond our current reach.

A precise description of the bundle and the canonical frame of item (1) of Theorem \ref{mainthm}, using the theory of Jacobi curves \cite{Zelenko_2006} and geometry of curves in projective spaces, can be found in \cite{doubrov2009local}. An alternative construction of this bundle and the canonical frame, using Tanaka-Morimoto theory, can be found in section 3 of \cite{Day2025}.

Theorem \ref{max_class_conjecture} was conjectured in \cite{DZ2006} (see also \cite{doubrov2009local}) nearly 20 years ago, but before the present paper, it had been confirmed only in the following very limited specific cases (see, e.g., \cite{Wendell}):
\begin{enumerate}
\item $5\leq n\leq 8$; 
\item for distributions with small grow vector $(2,3,5,6,7,\cdots, n)$ at every point; i.e., when $\dim D^j(q)=j+2$ for $4\leq j\leq n-2$;
\item in the case of $(2, 14)$-distributions with ``free" small growth vector $(2,3,5,8,14)$; 
\item for distributions associated with Monge equations 
\[
    z^{(m)} = F\big(x, y, y', \cdots, y^{(n-2-m)}, z,\cdots, z^{(m-1)}(n)\big), \quad \displaystyle{{\partial ^2 F\over \partial (y^{(n-2-m)})^2}\neq 0}.
\]
\end{enumerate}

Note that in \cite{AndersonKruglikov} it was demonstrated that the local model \eqref{Monge} is the most symmetric among Monge distributions. This was done by showing first that Monge distributions with fixed $1\leq m\leq \lfloor{\frac{n-2}{2}}\rfloor$ have fixed Tanaka symbol. In the sequel, those Tanaka symbols will be called \emph{Monge symbols}. Then by computing the universal Tanaka prolongation of such symbols, the authors find that the maximally symmetric model corresponds to $m=1$ and is locally equivalent to the model in \eqref{Monge}. However, for $n\geq 6$ generic germs of rank 2 distributions are not Monge \footnote{A heuristic explanation of this is that generic germs of $(2,n)$-distributions are described, up to local equivalence, by $\dim\mathrm {Gr}(2,n)-n=n-4$ functions of $n$ variables, whereas a Monge distribution is described by only one such function. Here, $\mathrm {Gr}(2,n)$ denotes the Grassmannian of planes in $\mathbb R^n$}: for $n\geq 7$ this follows from the fact that generic Tanaka symbols of $(2,n)$-distributions are not Monge symbols. For $n=6$ the statement follows from the fact that certain nontrivial invariants vanish for Monge distributions. \footnote{We believe that for $n\geq 6$ non-Monge distributions are generic even among distributions with Tanaka symbol isomorphic to a fixed Monge symbol.}

The previous approaches to proving Theorem \ref{max_class_conjecture} relied on attempts to compute the filtration \eqref{geod_flag} (below) directly in terms of the original distribution $D$. A key insight in \cite{Wendell} was that it suffices to prove Theorem~\ref{max_class_conjecture} for \emph{flat distributions} with prescribed Tanaka symbols \footnote{We also use this observation in the proof of the second sentence of Theorem \ref{max_class_conjecture}.} (for the definition of the Tanaka symbols and flat distributions, see \cite{Tanaka1970, Zelenko2009-qt} and also the end of section \ref{proof_sec}, after formula \eqref{Tanaka_comp}). However, this method depends on the classification of Tanaka symbols, which becomes infeasible due to the combinatorial explosion of possibilities as the dimension increases. To circumvent this obstacle, a two-stage strategy was proposed in \cite{Wendell}. The first stage involves computing the filtration \eqref{geod_flag} for free truncated nilpotent Lie algebras with two generators of a given step. The second stage aims to use the result of these computations for all nilpotent graded Lie algebras of the same step. However, the first stage quickly becomes computationally infeasible as the dimension grows, even for computer algebra systems. Moreover, even when the first stage was successfully completed (as in the 5-step case of item (3) above), it remained unclear how to use it to implement the second stage effectively.

%However, the first stage leads to extremely complex computations, even in relatively low-dimensional cases. For example, in case (3) above—corresponding to a 5-step truncated nilpotent Lie algebra with two generators—the calculations required intensive use of computer algebra software. Yet such tools quickly reach their computational limits, making computations for higher-step cases practically intractable. Moreover, even when the first stage was successfully completed (as in the 5-step case of item (3) above), it remained unclear how to use it to implement the second stage effectively. In fact, this second stage was not completed even for the 5-step case considered in \cite{Wendell}. The proof of item (4) above also heavily utilized computer algebra as well.}

In this paper, we adopt a completely different strategy,
%—referred to as the external approach—
which enables us to prove Theorem \ref{max_class_conjecture} in full generality, without relying on any computer algebra computations. Rather than attempting to compute the filtration \eqref{geod_flag} directly from the original distribution 
$D$ on $M$, we begin with an abstract filtration of the type \eqref{geod_flag} on the corresponding submanifold of the projectivized cotangent bundle $\mathbb PT^*M$. Assuming that the filtration enjoys the properties of one associated with a distribution of a given constant class greater than $1$ \footnote{A distribution with $5$ dimensional cube has class greater than $1$; see Lemma \ref{lemma: minimal class}.}, we show that it corresponds to a bracket-generating distribution only if the class is maximal. 
A key factor that convinced us of the promise of this method, and motivated us to pursue it, was the realization—based on general reasoning—that if the method were to fail, it would effectively yield a counterexample. Moreover, we recognized that the analysis should not strongly depend on the dimension, but from item (1) above, no counterexamples arise in low-dimensional cases. Finally, this approach is completely independent of the Tanaka symbols of the original distribution.

The next two subsections will expound the implications of Theorem \ref{mainthm} for non-Goursat rank 2 distributions and the implications of Theorem \ref{max_class_conjecture} for optimal control problems with constraint given by a rank 2 distribution with 5-dimensional cube, respectively.
\subsection{Canonical Frames for Non-Goursat Rank 2 Distributions}

We now explain why the assumption in equation \eqref{D^3_eq} is, in fact, not restrictive. In short, given a distribution which is not Goursat, one can apply iterative Cartan deprolongations (outlined below) at a generic point to reduce to a distribution satisfying \eqref{D^3_eq}. We thereby justify the title of the paper.

First, for $n = 3$ and $n = 4$, all generic germs of $(2, n)$-distributions are locally equivalent to each other, as established by the classical Darboux and Engel theorems. These distributions are modeled by the Cartan (or ``contact'') distributions on the jet spaces $J^1(\mathbb{R}, \mathbb{R})$ and $J^2(\mathbb{R}, \mathbb{R})$, respectively, where $J^k(\mathbb{R}, \mathbb{R})$ denotes the space of $k$-jets of functions from $\mathbb{R}$ to itself. For arbitrary $n \geq 3$, the Cartan distribution on $J^{n-2}(\mathbb{R}, \mathbb{R})$ provides a canonical model of a $(2, n)$-distribution, possessing an infinite-dimensional Lie algebra of symmetries given by the group of contact transformations. It is also worth noting that for $n \geq 4$, all such distributions have a $4$-dimensional cube. This shows that without assumption \eqref{D^3_eq} we may get models with infinite-dimensional symmetries.

On the other hand, even if \eqref{D^3_eq} does not hold at generic points, Theorem \ref{mainthm} is applicable (at a generic point) after a certain reduction procedure (see \cite[subsection 7.1]{doubrov2009local}) also called deprolongation in \cite[Remark 2.6]{doubrov2009local}. Indeed, because $D$ is bracket generating, its cube has dimension at least $4$ at generic points. Suppose that $D$ satisfies $\dim D^3(q)=4$ on an open set $M^\circ$ of $M$. Then the rank $3$ distribution $D^2$ on $M^\circ$ has a one-dimensional characteristic sub-distribution lying in $D$. At any $q_0\in M^\circ$, we can consider the quotient $\Pi:U\to \depr^1_{q_0}U$ of a neighborhood $U$ of $q_0$ by the corresponding one-dimensional foliation together with a new bracket generating rank $2$ distribution $\depr^1_{q_0}D$ on $\depr^1_{q_0}U$ obtained by the factorization of $D^2$. In fact, the germ of $D$ at $q_0$ can be uniquely reconstructed from $\depr_{q_0}^1D$, because it is equivalent to its Cartan prolongation (see \cite{Day2025} for details). Therefore $\depr^1_{q_0}D$ is called the \emph{(first) deprolongation of $D$ at the point $q_0$}. 

In the case that $\depr_{q_0}^1D$ has cube of rank 4 in a neighborhood of $\Pi(q_0)$, we can repeat this process at $\Pi(q_0)$ to obtain $\depr^1_{\Pi(q_0)}\big(\depr^1_{q_0}D\big)$, which we shall denote by $\depr^2_{q_0}D$ and call the \emph{second deprolongation of $D$ at $q_0$}. The $s$th deprolongation is defined inductively on the condition that $\depr^{s-1}_{q_0}D$ has cube of rank 4 in a neighborhood of image of $q_0$ under the corresponding composition of quotients. We denote this distribution by $\depr^s_{q_0}D$.

At a generic point $q_0$ of $M$, there exists $s$ so that after iterating this procedure $s$ times, one arrives at one of the following two cases:
\begin{enumerate}[label=(\roman*)]
\item The $(2,n-s)$ distribution $\depr^s_{q_0}D$ has $5$-dimensional cube in a neighborhood of the image of $q_0$ under the corresponding composition of quotients (so $s\leq n-5$).
\item The $(n-4)$th deprolongation $\depr^{n-4}_{q_0}D$ is well defined (and in this case is locally equivalent to the Engel distribution).
\end{enumerate}

We call the $s$ appearing in case (i) the \emph{deprolongation degree} of $D$ at $q_0$. We also set the deprolongation degree for case (ii) equal to $n-4$. Note that the deprolongation degree is defined at a generic point of $M$, but in general, this degree is only locally constant. It is not greater than $n-5$ in case (i).

In case (ii), the original distribution $D$ is locally equivalent to a Goursat distribution \cite{MZ2010} at $q_0$. Goursat distributions are defined using the \emph{strong derived flag} $\{D^{[j]}\}_{j=1}^\infty$, where instead of \eqref {week_derived} we have
\begin{equation}
\label{strong_derived}
    D^{[1]}(q) = D(q)\quad \text{and}\quad D^{[i]}(q) = D^{[i-1]}(q)+[D^{[i-1]},D^{[i-1]}](q)\ \text{for } i>1.
\end{equation}
The distribution $D$ is called \emph{Goursat} if $\dim D^{[i]}(q)=i+1$ for every $i\geq 1$ and every $q\in M$. It is very well known \cite{MZ2010} that any $(2,n)$ Goursat distribution at a generic point is locally equivalent to the Cartan distribution on $J^{n-2}(\mathbb R, \mathbb R)$ and therefore the germs at such points have an infinite-dimensional infinitesimal symmetry algebra.

In case (ii), the germ of $D$ at $q_0$ is Goursat. In case (i) our original distribution is not Goursat near $q_0$, and we can apply Theorem \ref{mainthm} or Cartan's result in \cite{FiveVariables} to $\depr_{q_0}^sD$ with $n$ replaced by $n-s$, namely:

\begin{theorem}
\label{non_Goursat_theorem}
Let $D$ be a bracket-generating rank $2$ distribution on an $n$-dimensional manifold $M$, which is nowhere Goursat. For a point $q_0$ for which the deprolongation degree $s$ is defined
%{\color{red} and at most $n-5$} 
\footnote{Note that the deprolongation degree is defined at a generic point. Since $D$ is nowhere Goursat, we have that $s\leq n-5$.}, the following two statements are true:
\begin{enumerate}
    \item If $s<n-5$, then one can canonically assign to $\depr_{q_0}^s D$ a frame on a $(2n - 2s - 1)$-dimensional bundle over a neighborhood of a generic point in the ambient manifold of $\depr_{q_0}^s D$. In particular, this implies that the symmetry group of the germ of $D$ at $q_0$ has dimension at most $2n - 2s - 1$.
    \item If $s=n-5$, then one can canonically assign to $\depr_{q_0}^s D$ a frame on a $14$-dimensional bundle over a neighborhood of a generic point in the ambient manifold of $\depr_{q_0}^s D$. In particular, this implies that the symmetry group of the germ of $D$ at $q_0$ has dimension at most $14$.
\end{enumerate}
In either case, the maximally symmetric germ among $(2,n)$ with deprolongation degree $s$ is locally equivalent to the generic germ of the $s$th Cartan prolongation of the rank $2$ distribution associated with the Monge equation
    \begin{equation}
        \label{Monge_k}
        z'(x) = \left( y^{(n - 3 - s)}(x) \right)^2,
    \end{equation}
    or, equivalently, with the rank $2$ distribution on $\mathbb{R}^{n-s}$ given by the Pfaffian system \eqref{Pfaff}, with $n$ replaced by $n-s$.
\end{theorem}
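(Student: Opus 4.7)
The plan is to reduce the statement to Theorem \ref{mainthm} (in the first case) or Cartan's five-variables theorem (in the second case) applied to the fully deprolonged distribution $\depr_{q_0}^s D$, and then to transport the canonical frame and the symmetry bound back to $D$ via the reconstruction of $D$ as an iterated Cartan prolongation of $\depr_{q_0}^s D$. Fix $q_0$ whose deprolongation degree $s$ is defined; by the discussion preceding the statement, $\depr_{q_0}^s D$ is a bracket-generating rank $2$ distribution on an $(n-s)$-dimensional manifold. In case $s<n-5$, the definition of deprolongation degree guarantees that the cube of $\depr_{q_0}^s D$ is $5$-dimensional near the image of $q_0$, so Theorem \ref{mainthm} with $n$ replaced by $n-s$ furnishes a canonical frame on a bundle of dimension $2(n-s)-1=2n-2s-1$. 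In case $s=n-5$, the distribution $\depr_{q_0}^s D$ is a $(2,5)$-distribution which, since $D$ is nowhere Goursat, must have $5$-dimensional cube, and Cartan's theorem furnishes a canonical frame on a $14$-dimensional bundle.

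Next I would transfer the symmetry bounds from $\depr_{q_0}^s D$ back to $D$. The key tool, already recorded in the excerpt, is that $D$ is locally equivalent to the $s$-fold Cartan prolongation of $\depr_{q_0}^s D$. Because the characteristic line field used at each deprolongation step is intrinsically attached to the distribution, every local symmetry of $D$ descends to a local symmetry of $\depr_{q_0}^s D$; conversely, any local symmetry of the deprolonged distribution lifts uniquely through the iterated Cartan prolongation to a local symmetry of $D$. This produces an isomorphism of infinitesimal symmetry algebras, from which the claimed upper bound on $\dim \Aut(D,q_0)$ follows from the one obtained at the bottom level.

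For the identification of the maximally symmetric model, suppose $\dim\Aut(D,q_0)$ attains its upper bound. By the isomorphism above, so does $\dim\Aut(\depr_{q_0}^s D)$, and item (2) of Theorem \ref{mainthm} (or Cartan's explicit model when $s=n-5$) identifies $\depr_{q_0}^s D$, up to local equivalence at a generic point, with the distribution associated to the Monge equation \eqref{Monge_k}, equivalently with the Pfaffian system \eqref{Pfaff} with $n$ replaced by $n-s$. Reassembling $D$ as the $s$-fold Cartan prolongation yields the asserted model. The main obstacle I anticipate is making rigorous the transfer-of-symmetries step: one must verify that the characteristic line subdistribution of $D^2$ appearing in the deprolongation is canonically attached to $D$, so that deprolongation commutes with local equivalences, and that iterated Cartan prolongation produces a genuine bijection between local symmetry algebras. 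These points are largely bookkeeping given the formal treatment in \cite{Day2025} and \cite{doubrov2009local}, but they are where the careful work of the proof lies.
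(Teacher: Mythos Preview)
Your proposal is correct and matches the paper's approach exactly: the paper treats Theorem~\ref{non_Goursat_theorem} as an immediate consequence of the deprolongation discussion preceding it, applying Theorem~\ref{mainthm} (or Cartan's result when $s=n-5$) to $\depr_{q_0}^s D$ with $n$ replaced by $n-s$, and invoking the fact that the germ of $D$ is uniquely reconstructed from $\depr_{q_0}^s D$ as its $s$-fold Cartan prolongation. Your write-up is in fact more explicit than the paper about the isomorphism of symmetry algebras under prolongation/deprolongation, and your identification of that step as the place requiring care is apt; one minor point is that in the case $s=n-5$ the $5$-dimensionality of the cube of $\depr_{q_0}^s D$ comes directly from the definition of case~(i) (deprolongation degree), rather than from a separate appeal to $D$ being non-Goursat.
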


The last theorem together with the classical theory of Goursat distributions covers in principle (i.e., modulo analysis of the invariants coming from the canonical frame in non-Goursat case) the local geometry of all bracket generating rank 2 distributions in a neighborhood of generic points: 
In the Goursat case, the germs at generic points are all locally equivalent. In the non-Goursat case, after a suitable number of deprolongations, one can construct a canonical structure of absolute parallelism.

We emphasize that our analysis concerns \emph{arbitrary} bracket generating rank $2$ distributions, but only in neighborhoods of \emph {generic} points, where the notion of genericity is explicitly defined by the assumption that the class of the distribution (or its appropriate deprolongation) at a point is maximal (see Definition \ref{max_class_def} below).

We do not address the equivalence problem for germs of distributions at points that do not satisfy this genericity condition (which we refer to as singular points). Even in the case of Goursat distributions, the moduli space of all germs is quite wild and consists of all germs appearing in the so-called Monster Tower (see \cite{MZ2010})—even though the generic germ is unique up to local equivalence.

In the non-Goursat case, the situation is much more intricate: even generic germs admit functional invariants coming from the canonical frames. Nonetheless, if these function invariants are nontrivial, the canonical frame constructed at generic points may still provide valuable information about the equivalence of germs at singular points, for instance, by studying how the invariants of the distribution behave as one approaches these singularities.

\subsection{The Existence of Corank 1 Abnormal Extremals through Generic Points}
Finally, note that Theorem \ref{max_class_conjecture} is of independent interest from the perspective of optimal control theory. 
On the space of Lipschitz curves that are almost everywhere tangent to a given distribution $D$, called \emph{horizontal curves of $D$}, consider any variational problem that assigns a cost to each such curve—for example, the problem of minimizing length with respect to a sub-Riemannian metric. The Pontryagin Maximum Principle \cite{Pontryagin, Agrachev_Sachkov_2004} characterizes, through the Hamiltonian formalism, a class of curves, known as Pontryagin extremal trajectories, among which the minimizers of such problems (with fixed endpoints) must lie. Extremal trajectories for which the Lagrange multiplier which multiplies the cost vanishes are called \emph{abnormal extremal trajectories}; they depend only on the distribution $D$ and not on the specific cost functional, and they are also called \emph{singular curves of the distribution $D$} \cite{Montgomery_book}. 

While abnormal extremal trajectories can be described purely geometrically using the canonical symplectic form on the cotangent bundle of $M$ (\cite{Liu-Sussmann, Montgomery_book,Zelenko99}, see also subsection \ref{subsec: Char Line Distr} below), for brevity we use here an equivalent description as critical points of the endpoint map: Given a point $q_0$ and a time $T$, denote by $\Omega_{q_0}(T)$ the set of all horizontal curves of $D$ starting at $q_0$ defined on $[0,T]$, and by $F_{q_0, T} : \Omega_{q_0}(T) \to M$ the \emph{endpoint map} that takes each $\gamma \in \Omega_{q_0}(T)$ to the endpoint $\gamma(T)$. Note that if the set $\Omega_{q_0}(T)$ has the structure of a $(L^\infty([0,T]))^l$-manifold, where $l$ is the rank of $D$. 
\begin{definition}
\label{abn_def}
A horizontal curve $\gamma : [0, T] \to M$ is an \emph{abnormal extremal trajectory} of the distribution $D$ if it is a critical point of the mapping $F_{q_0,T}$, that is, if $\operatorname{Im} \left( d(F_{q_0,T})_\gamma \right) \neq T_{\gamma(T)} M$, where $d (F_{q_0,T})_\gamma$ denotes the differential of the endpoint map $F_{q_0,T}$ at $\gamma$. The \emph{corank} of the abnormal extremal trajectory $\gamma$ is defined as the codimension of $\operatorname{Im} \left( D_\gamma F_{q_0,T} \right)$ in $T_{\gamma(T)}$ and is denoted by $\mathrm{corank}(\gamma)$. 
\end{definition}
Obviously, the corank of abnormal extremal trajectory is at least $1$. Theorem \ref{max_class_conjecture} implies the following theorem:

\begin{theorem}
\label{corank1_abnormal_thm}
    Given a bracket generating rank $2$ distribution $D$ with $5$-dimensional cube on a manifold $M$, for a generic point $q_0\in M$ there exists an abnormal extremal trajectory of corank $1$ starting at $q_0$. The set of such points contains all points where the distribution is equiregular. %be a bracket generating rank 2 distribution on an $n$-dimensional manifold, $n>5$. If the weak derived flag of $D$ begins with dimensions $(2,3,5,\ldots)$, then $D$ is of maximal class.
\end{theorem}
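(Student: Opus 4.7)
The plan is to deduce Theorem \ref{corank1_abnormal_thm} directly from Theorem \ref{max_class_conjecture} via the symplectic-geometric characterization of abnormal extremals on the cotangent bundle. Recall (see subsection \ref{subsec: Char Line Distr} and \cite{Liu-Sussmann, Montgomery_book, Zelenko99}) that the abnormal extremal trajectories of $D$ are, up to parametrization, precisely the projections to $M$ of integral curves of the characteristic line distribution living on the non-zero part of the annihilator bundle $D^\perp \subset T^*M$; moreover, the corank of such a trajectory is controlled by the codimension of the stratum on which its lift sits, the strata being defined by successive rank drops of the restriction of the canonical symplectic form $\omega$ of $T^*M$.

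First, I would recall from Section \ref{sectionSymp} that the class of $D$ at a point $q\in M$ is defined precisely in terms of this filtration of $D^\perp$ by rank-drop strata (Definitions \ref{class_def} and \ref{max_class_def}), and that maximality of class at $q$ corresponds to the existence, through a generic covector in $(D^\perp)_q\setminus 0$, of a well-defined characteristic line in the top stratum whose projection to $M$ is a nontrivial horizontal curve issuing from $q$. By Lemma \ref{lemma: minimal class}, a rank $2$ distribution with $5$-dimensional cube has class strictly greater than $1$, so the minimal possible corank of an abnormal extremal is $1$, and it is attained through $q_0$ precisely when $D$ is of maximal class at $q_0$. This equivalence is the content of Remark \ref{strength_corank_rem}.

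Second, applying Theorem \ref{max_class_conjecture} (whose proof is the main business of the paper), which asserts maximality of class at a generic point as well as at every equiregular point, I obtain at such $q_0$ a covector $p_0 \in (D^\perp)_{q_0}\setminus 0$ through which the characteristic line distribution is well-defined in the top stratum. Integrating the corresponding smooth line field produces an integral curve starting at $p_0$, whose projection to $M$ is, by construction, a horizontal curve starting at $q_0$ and a corank $1$ abnormal extremal trajectory. The equiregularity statement then follows because the equiregular locus is contained in the set of points of maximal class, again by Theorem \ref{max_class_conjecture}.

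The only genuine obstacle, namely establishing the genericity of maximal class, has already been absorbed into Theorem \ref{max_class_conjecture}; the remaining work is the essentially standard dictionary between the class of a distribution and the minimal corank of abnormal extremals through a point, together with the purely local observation that the characteristic line field can be integrated smoothly where it is well-defined. Thus, once Theorem \ref{max_class_conjecture} is established, Theorem \ref{corank1_abnormal_thm} follows as an essentially immediate corollary.
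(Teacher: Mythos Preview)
Your outline correctly identifies that Theorem \ref{max_class_conjecture} does the heavy lifting, but the bridge you build from ``maximal class'' to ``corank $1$'' is not the one the paper uses, and as written it does not stand on its own.

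First, the description of the class is off. In this paper the class at $\lambda$ is \emph{not} defined via rank-drop strata of the symplectic form on $D^\perp$; it is defined by the growth of the filtration $\mathcal J^{(i)}$ on $\mathcal M=\mathbb P\bigl((D^2)^\perp\setminus (D^3)^\perp\bigr)$ obtained by osculating the lift $\mathcal J$ of $D$ with the characteristic line distribution $\mathcal C$ (Definitions \ref{class_def} and \ref{max_class_def}). So the ``standard dictionary'' you invoke between a stratification of $D^\perp$ and the corank is not available from the definitions given here.

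Second, and more seriously, the equivalence ``maximal class at $q_0$ $\Leftrightarrow$ existence of a corank $1$ abnormal through $q_0$'' is precisely what must be proved; Remark \ref{strength_corank_rem} is a forward pointer to Theorem \ref{corank1_abnormal_thm_reg}, not an independent fact you can cite. The paper establishes this link by an explicit computation: using the Agrachev--Sarychev formula one shows
\[
\operatorname{Im}\bigl(d(F_{q_0,T})_\gamma\bigr)=\mathrm{span}\bigl\{d(\pi\circ e^{(T-t)X})_{\Gamma(t)}\mathcal J(\Gamma(t)):0\le t\le T\bigr\},
\]
from which $d\pi\,\mathcal J^{(\nu(\Gamma(T)))}(\Gamma(T))\subset \operatorname{Im}\bigl(d(F_{q_0,T})_\gamma\bigr)$, and a dimension count gives $\mathrm{corank}(\gamma)\le n-2-\nu(\Gamma(T))$. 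When $\nu=n-3$ this forces corank $1$. Your argument skips exactly this step, so as it stands it is circular: you assume the class/corank correspondence in order to conclude it.
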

\begin{remark}
\label{strength_corank_rem}
In fact, Theorem \ref{max_class_conjecture} implies 
%(and even it is equivalent to) 
a stronger statement: first, such an abnormal extremal is regular in a sense of \cite{Liu-Sussmann} (equivalently, satisfies the generalized Legendre-Glebsch condition in terminology of \cite{Agrachev_Sachkov_2004, Zelenko99}); and second, there are plenty of such abnormal extremal trajectories; see Theorem \ref{corank1_abnormal_thm_reg} below.
\end{remark}
%Similarly to Theorem \ref{max_class_conjecture} 
We conjecture that Theorem \ref{corank1_abnormal_thm} (and its stronger version, Theorem \ref{corank1_abnormal_thm_reg} below) are valid at any point--that is, the phrase ``at a generic point" can be omitted—although this remains beyond our current reach. See Remark \ref{q_0_nonmax_class_rem} below for discussion of this issue.

The paper is organized as follows: In Section \ref{sectionSymp} we recall the definition of the class of a distribution from \cite{doubrov2009local} which is used in Theorem \ref{max_class_conjecture}. In Section \ref{proof_sec} we prove Theorem \ref{max_class_conjecture} and therefore Theorems \ref{mainthm} and \ref{non_Goursat_theorem}. Finally, in section \ref{corank_sec} we prove a stronger version of Theorem \ref{corank1_abnormal_thm}, which is labeled Theorem \ref{corank1_abnormal_thm_reg}.

{\bf Acknowledgment} We would like to thank Boris Doubrov for several valuable comments, which clarified and simplified some arguments.

\section{Rank 2 distributions of Maximal class}\label{sectionSymp}

The \emph{class} of a distribution was defined in \cite{DZ2006, doubrov2009local} in the development of the so-called symplectification procedure. %which was originally developed in \cite{DZ2006} and \cite{doubrov2009local}. 
For completeness, we define the notion of class here, reproducing the constructions of our previous work \cite{Day2025}, which deviates only modestly from the original construction. This is especially important, because in contrast to \cite{Day2025,DZ2006,doubrov2009local} , which were focused mainly on the case of maximal class, here we need to consider an arbitrary class.

The symplectification procedure utilizes the natural contact structure on the projectivized cotangent bundle $\mathbb{P}T^*M$ to construct a so-called ``even contact structure'' on a submanifold $\mathcal{M}\subseteq \mathbb{P}T^*M$ of codimension 3. The kernel of this even contact structure is a canonical line distribution $\mathcal{C}$. Lifting $D$ to $\mathbb{P}T^*M$ and osculating with $\mathcal{C}$ yields a flag at each point of $\mathbb{P}T^*M$. The class of the distribution will then be defined by considering the growth of the dimensions associated with this flag. 

 \subsection{The Characteristic Line Distribution and Regular Abnormal Extremals}
 \label{subsec: Char Line Distr}
 Let $D$ be a bracket generating rank 2 distribution on a smooth manifold $M$ of dimension $n\geq5$. Define the annihilator of $D^{\ell}$
 \[
     \big(D^{\ell}\big)^\perp = \big\{ (p,q)\in T^*M : p\cdot v = 0\ \forall\ v\in D^{\ell}(q)\big\}.
 \]
 
 Consider the fiberwise projectivization $\mathbb{P}T^*M$ of the cotangent bundle.
 Since each $(D^{\ell})^\perp$ is a linear subbundle of $T^*M$, we may define a codimension 3 submanifold
 \[
     \mathcal{M} = \mathbb{P}\Big((D^{2})^\perp\setminus (D^{3})^\perp\Big)\subseteq \mathbb{P}T^*M.
 \]
 Let $\mathfrak{s}$ be the tautological (Liouville) one-form on $T^*M$; explicitly, for coordinates $(q^i)$ on $M$ with conjugate variables $p_i$, $\mathfrak{s}=\sum p_i\text{d}q^i$. Recall that $d\mathfrak{s}$ is the canonical symplectic form on $T^*M$. The form $\mathfrak{s}$ passes to a conformal class $\oline{\mathfrak{s}}$ of 1-forms on $\mathbb{P}T^*M$ which defines a contact structure.
 
 Since $\text{rank}(D^{2})=3$, the submanifold $\mathcal{M}$ has codimension 3 in the contact manifold $\mathbb{P}T^*M$. Restricting the contact forms $\oline{\mathfrak{s}}$ to $\mathcal{M}$ gives a hyperplane distribution \begin{equation} \label{evencontact_H} H=\ker\left(\oline{\mathfrak{s}}|_{\mathcal{M}}\right) \end{equation}
 with a conformal class of skew-symmetric forms $\oline{\sigma}=d\oline{\mathfrak{s}}|_{H}$ well-defined on this hyperplane distribution. Since $H$ is a hyperplane distribution, it has rank $2n-5$, so the kernel of the form $\oline\sigma$ must have odd rank. In \cite{doubrov2009local}, the authors show that $\ker(\oline{\sigma})$ has the minimal rank of 1, so that $\mathcal{M}$ is equipped with a so-called \emph{even contact structure}. We shall write $\mathcal{C}$ for the line distribution $\ker(\oline{\sigma})$, called the \emph{characteristic line distribution} of $D$. Following (\cite{Liu-Sussmann,Agrachev-Sarychev, Zelenko99}) 
 \begin{definition}
 \label{reg_abn_def}
 The integral curves of $\mathcal{C}$ are called \emph{regular abnormal extremals} of the distribution $D$, and their projections onto $M$ are \emph{regular abnormal extremal trajectories}. 
\end{definition} 
The reason why regular abnormal extremal trajectories are indeed abnormal extremal trajectories in the sense of Definition~\ref{abn_def} is explained at the beginning of the proof of Theorem \ref{corank1_abnormal_thm_reg} in Section \ref{corank_sec}. In particular, see relation \eqref{diff_end_point_jac}.
%For brevity, we omit `regular' when referring to such curves.

 \subsection{Maximality of Class}
 \label{sectionSympD} Let $\pi:\mathcal M\to M$ be the canonical projection. The lift of $D$ to $\mathcal{M}$ is denoted by:
 \begin{equation}
 \label{J0}
     \mathcal{J}(\lambda) = \big\{v\in T_\lambda \mathcal{M} : \pi_*(v)\in D\bigl(\pi(\lambda)\bigr)\big\}
 \end{equation}
 which is a distribution of rank $n-2$. Osculating with the characteristic line distribution $\mathcal{C}$, we obtain from $\mathcal{J}$ a flag at each point of $\mathcal{M}$. Write $\mathcal{J}^{(0)} = \mathcal{J}$ and define recursively
 \begin{equation}
 \label{geod_flag}
     \mathcal{J}^{(i)}(\lambda) = \mathcal{J}^{(i-1)}(\lambda) + [\mathcal{C}, \mathcal{J}^{(i-1)}](\lambda)\quad \text{for}\ i\geq 1
 \end{equation}
 In Proposition 1 of the paper \cite{doubrov2009local}, the authors show that for each $0\leq i$ and each $\lambda\in \mathcal{M}$, we have 
 \begin{equation}
 \label{jump}
 \text{dim}\big(\mathcal{J}^{(i+1)}(\lambda)\big) - \text{dim}\big(\mathcal{J}^{(i)}(\lambda)\big) \leq 1.
 \end{equation}
 so that 
 \begin{equation}
 \label{max_dim_i}
 \dim
 \mathcal{J}^{(i)}(\lambda) \leq n-2+i.
 \end{equation}
 Let $H$ be as in \eqref{evencontact_H}. Since $\mathcal{C}$ is a Cauchy characteristic of $H$, and since $\mathcal{J}$ is contained within this distribution, so too is each $\mathcal{J}^{(i)}$. Hence 
 \begin{equation}
 \label{max_dim}
 \dim\mathcal{J}^{(i)}(\lambda)\leq \text{rank}(H)= 2n-5. 
 \end{equation}
 Define the integer-valued functions on $\mathcal{M}$ and $M$, respectively:
 \begin{gather}
     \label{abn_ext_class}
     \nu(\lambda) = \min\{i\in \N: \mathcal{J}^{(i+1)}(\lambda) = \mathcal{J}^{(i)}(\lambda)\}
     \\
     \label{class_eq}
     m(q) = \max\{\nu(\lambda): \lambda\in \pi^{-1}(q)\}
 \end{gather}
 One can show that the set $\{\lambda\in \pi^{-1}(q): \nu(\lambda)=m(q)\}$ is nonempty and Zariski open in the fiber $\pi^{-1}(q)$ and that $\nu(\cdot)$ and $m(\cdot)$ are lower semicontinuous. 
 
 \begin{definition}
 \label{class_def}
 The value $\nu(\lambda)$ defined by \eqref{abn_ext_class} is called the \emph{class at $\lambda$} of the regular abnormal extremal passing through $\lambda$. 
 The value $m(q)$ defined by \eqref{class_eq} is called the \emph{class of $D$ at $q$}. 
 \end{definition}
 The relation between the class at $\lambda$ of the regular abnormal extremal passing through $\lambda$ and the corank of the corresponding abnormal extremal trajectory is given in section \ref{corank_sec}; see \eqref{class_corank_ineq_3}.
 Note that \eqref{max_dim_i} and \eqref{max_dim} imply that $\nu(\lambda)\leq n-3$, and therefore $m(q)\leq n-3$. The equality $\nu(\lambda)=n-3$ holds if and only if $\mathcal{J}^{(n-3)}(\lambda)=H(\lambda)$. 
 \begin{definition}
 \label{max_class_def}
We say that $D$ is \emph{of maximal class} at $q\in M$, if its class $m(q)$ is equal to $n-3$. On the other hand, we say that $D$ is \emph{of minimal class} at $q\in M$ if $m(q)=1$.
\end{definition}
  The following lemma is proven as Remark 3.4 in \cite{Zelenko_2006} and will be used in the proof of Theorem \ref{max_class_conjecture}.
 \begin{lemma}
    \label{lemma: minimal class}
    Let $D$ be a bracket generating rank 2 distribution on an $n$-dimensional manifold $M$, $n>5$. For each $q\in M$, $D$ is of minimal class at $q$ if and only if $D^{3}(q)$ has dimension $4$.
 \end{lemma}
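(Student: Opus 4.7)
The plan is to compute $\mathcal{C}$ and the first two steps of the flag $\mathcal{J}^{(i)}$ explicitly in the Hamiltonian formalism on $T^*M$, and to derive the minimal-class condition directly from the structure of $D^3/D^2$. Fix a local frame $X_1,X_2$ of $D$ near $q_0$ and put $H_i=\mathfrak{s}(X_i)$, $H_{12}=\{H_1,H_2\}$, $H_{ij,k}=\{H_i,H_{jk}\}$, and $X_3:=[X_1,X_2]$. Then $(D^2)^\perp=\{H_1=H_2=H_{12}=0\}$ and $\mathcal{M}$ is the projectivization of the open subset where $(H_{112},H_{212})\neq 0$. A standard computation of the kernel of $d\mathfrak{s}$ restricted to $(D^2)^\perp$ identifies the characteristic direction on $\mathcal{M}$ (modulo the Euler vector field) as the class of
\[
\vec{C}=H_{212}\vec{H_1}-H_{112}\vec{H_2}.
\]
Its projection $\pi_*\vec{C}=H_{212}X_1-H_{112}X_2$ spans a line $L_\lambda\subseteq D(\pi(\lambda))$, nonzero on $\mathcal{M}$, and $\vec{C}\in\mathcal{J}$. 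A tensoriality argument using that $\mathcal{J}$ contains the $\pi$-vertical bundle gives $\pi_*\mathcal{J}^{(1)}=D^2$ and $\mathrm{rank}\,\mathcal{J}^{(1)}=n-1$, so $\nu(\lambda)\geq 1$ everywhere.

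For any section $j\in\mathcal{J}^{(1)}$ with $\pi_*j=\sum_k Z^kX_k$, a direct bracket computation yields
\[
\pi_*[\vec{C},j]_\lambda\equiv Z^3\bigl(H_{212}[X_1,X_3]-H_{112}[X_2,X_3]\bigr)_{\pi(\lambda)}\pmod{D^2},
\]
the remaining contributions $\vec{C}(Z^k)X_k$, the terms $Y^iZ^k[X_i,X_k]$ for $k\in\{1,2\}$, and $j(Y^i)X_i$ (where $Y^i$ are the components of $\pi_*\vec{C}$) all landing in $D^2$. Setting $\xi_i:=[X_i,X_3]\bmod D^2\in D^3/D^2$ and using that $H_{112}(\lambda)=p_\lambda(\xi_1)$, $H_{212}(\lambda)=p_\lambda(\xi_2)$ (since $p_\lambda$ annihilates $D^2$), the right-hand side becomes the Plücker-type expression $p_\lambda(\xi_2)\xi_1-p_\lambda(\xi_1)\xi_2\in D^3/D^2$. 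Hence $\nu(\lambda)=1$ (equivalently $\mathcal{J}^{(2)}(\lambda)=\mathcal{J}^{(1)}(\lambda)$) iff this expression vanishes at $\lambda$, iff $\xi_1$ and $\xi_2$ are proportional in $D^3/D^2$ at $\pi(\lambda)$, iff $\dim D^3(\pi(\lambda))=4$.

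Applying this characterization pointwise: if $\dim D^3(q)=4$ for every $q$ in a neighborhood $U$ of $q_0$, then $\nu(\lambda)=1$ throughout $\pi^{-1}(U)$, so $m(q_0)=1$; conversely, if $\dim D^3(q)=5$ at some $q\in U$, then $\xi_1$ and $\xi_2$ are independent at $q$, the Plücker expression is nonzero at generic $\lambda\in\pi^{-1}(q)$, giving $\nu(\lambda)\geq 2$ and $m(q)\geq 2$. The equivalence ``$m\equiv 1$ on a neighborhood of $q_0$ iff $\dim D^3=4$ on a neighborhood of $q_0$'' therefore follows, matching the lemma as stated. The main obstacle is the explicit bracket computation leading to the Plücker-type expression: one must expand $[\vec{C},j]$ via the Leibniz rule in the Poisson formalism and verify that all contributions to $\pi_*[\vec{C},j]_\lambda\bmod D^2$ other than $Z^3(H_{212}\xi_1-H_{112}\xi_2)$ land in $D^2$, so that the obstruction to $\nu(\lambda)=1$ reduces exactly to the question of whether $\dim(D^3/D^2)=1$ at $\pi(\lambda)$; once this bookkeeping is complete, both directions of the lemma are a short linear-algebra consequence.
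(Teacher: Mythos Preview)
The paper does not supply its own proof of this lemma; it simply refers to Remark~3.4 of \cite{Zelenko_2006}. Your Hamiltonian-formalism computation is the standard route to this kind of result and is correct in its essentials: the identification of $\vec C$ on $(D^2)^\perp$, the equality $\mathcal J^{(1)}=\pi_*^{-1}(D^2)$, and the reduction of $\pi_*[\vec C,j]\bmod D^2$ to the Pl\"ucker expression $p_\lambda(\xi_2)\,\xi_1-p_\lambda(\xi_1)\,\xi_2$ all check out.

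Two small remarks. First, your computation actually yields the sharper \emph{pointwise} equivalence $m(q)=1\Leftrightarrow\dim D^3(q)=4$: when $\dim D^3(q)=5$ the expression $p_\lambda(\xi_2)\xi_1-p_\lambda(\xi_1)\xi_2$ is nonzero at \emph{every} $\lambda\in\pi^{-1}(q)$, not just generic ones, since its vanishing would force $p_\lambda(\xi_1)=p_\lambda(\xi_2)=0$, i.e.\ $\lambda\in(D^3)^\perp$, which is excluded from $\mathcal M$. Second, your final paragraph quietly replaces the lemma's literal assertion ``$m(q_0)=1$'' by ``$m\equiv 1$ on a neighborhood''. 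The pointwise equivalence gives the latter immediately; for the lemma as stated, the direction ``$m(q_0)=1\Rightarrow\dim D^3=4$ on a neighborhood'' would need one more word (your argument only yields $\dim D^3(q_0)=4$). For the paper's sole application, in Theorem~\ref{class_constant} where $\rank D^3=5$ is assumed everywhere, either formulation suffices.
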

 Proposition 3.4 of \cite{Zelenko_2006} demonstrates that germs of $(2,n)$ distributions of maximal class are generic. Theorem \eqref{max_class_conjecture} is a much stronger statement that we want to prove.
 
 Note that the class of the distribution is constant in a neighborhood of a generic point of $M$. To prove the first sentence of Theorem \ref{max_class_conjecture}, it suffices to restrict our considerations to such neighborhoods, as we do in Theorem \ref{class_constant} below. Therefore, instead of introducing special notation for these neighborhoods, we will, from now on, assume that the distribution $D$ has constant class $m$ on $M$. Then the set
 \[
     \mathcal{R}_{m}=\{\lambda\in \mathcal{M}: \nu(\lambda)=m\}.
 \]
 is open and dense in the space $\mathcal{M}$.

\subsection{Involutivity Conditions}
Now let $D$ be a rank 2 distribution of constant class $m$ on an $n$-dimensional manifold $M$, $n\geq5$. Then by \eqref{jump} for any $q\in M$ and any $\lambda\in \mathcal{R}_{m}$, the flag
 \[
     \mathcal{J}(\lambda)\subseteq\mathcal{J}^{(1)}(\lambda)\subseteq \cdots\subseteq \mathcal{J}^{(m)}(\lambda) \subseteq H(\lambda)\subseteq T_\lambda\mathcal{R}_{m}
 \]
 has the property that $\text{rank}(\mathcal{J}^{(i+1)})= \text{rank}(\mathcal{J}^{(i)})+1$ for each $0\leq i \leq m-1$. Further, from the assumption that the class is equal to $m$ it follows that
 \begin{equation}
    \label{X_stab}[\mathcal{C},\mathcal{J}^{(m)}]\subseteq \mathcal{J}^{(m)}.
 \end{equation}
 We can use the conformal class of 2-forms $\oline{\sigma}$ defined in section \ref{subsec: Char Line Distr} to continue this flag: for each $i\geq 1$ and each $\lambda\in \mathcal{R}_{m}$, define
 \[
     \mathcal{J}_{(i)}(\lambda) = \big\{v\in T_\lambda \mathcal{R}_{m} : \oline{\sigma}(v,w) = 0\ \forall\ w\in \mathcal{J}^{(i)}(\lambda)\big\},
 \]
 the skew complement of $\mathcal{J}^{(i)}(\lambda)$ with respect to $\oline\sigma$. From the definition of $\mathcal{M}$, it follows quickly that $\oline{\sigma}\left(\mathcal{J},\mathcal{J}\right) = 0$; we obtain a flag:
 \begin{equation}
     \label{calJFlag}
     \mathcal{C}(\lambda)\subseteq \mathcal{J}_{(m)}(\lambda)\subseteq \cdots \subseteq \mathcal{J}_{(1)}(\lambda)\subseteq \mathcal{J}(\lambda)\subseteq \mathcal{J}^{(1)}(\lambda)\subseteq \cdots \subseteq \mathcal{J}^{(m)}(\lambda)\subseteq H(\lambda)\subseteq T_\lambda\mathcal{R}_{m}.
 \end{equation}
 By \eqref{jump}, we have for each $0 < i \leq m$ that
 \begin{equation}
    \label{flag dims}
     \dim\big(\mathcal{J}_{(i)}(\lambda)\big) = n-2-i\quad\text{and}\quad \dim\big(\mathcal{J}^{(i)}(\lambda)\big) = n-2+i.
 \end{equation}
 
 At each $\lambda\in \mathcal{R}_{m}$, one can show that
 \[
    \mathcal{J}^{(1)}(\lambda) = \big\{v\in T_\lambda \mathcal{M} : \pi_*(v)\in D^{2}(\lambda)\big\}
 \]
 which implies (with some computation) that $\mathcal{J}_{(1)}(\lambda) = \ker(T_\lambda\pi)\oplus \mathcal{C}(\lambda)$. Define $V_1(\lambda) = \ker(T_\lambda\pi)$, the vertical subspace over $\lambda$. For $i=0$ and for $2\leq i\leq m,$ define
 \begin{equation}
 \label{Splitting}
     V_i(\lambda) = \mathcal{J}_{(i)}(\lambda)\cap V_1(\lambda),
 \end{equation}
 the vertical component of the $(n-2-i)$-dimensional piece of the flag at $\lambda$. From \eqref{calJFlag} it is clear that 
 \begin{equation}
\label{vert_inclusion}
V_{i}(\lambda)\subset V_{i-1}(\lambda), \quad i\geq 1.
 \end{equation}
Further, observe that $V_0(\lambda)=V_1(\lambda)$. From \eqref{calJFlag}, one can also observe that for each $1\leq i\leq m$
 \begin{equation}
    \label{J_i = C + V_i}
    \mathcal{J}_{(i)}(\lambda) = V_i(\lambda)\oplus \mathcal{C}(\lambda),
 \end{equation}
 The $V_i$ also satisfy involutivity conditions; in the following proposition, which is Lemma 2 in \cite{doubrov2009local}, recall that $V_0=V_1$.
\begin{prop}
\label{prop: involutivity}
    For each $q\in M$ and any $0\leq i \leq m$, we have involutivity conditions
    \begin{gather}
    \label{invol1}
        [V_i,V_i] \subseteq V_i
        \\
    \label{invol2}
        [V_i,\mathcal{J}^{(i)}] \subseteq \mathcal{J}^{(i)}
    \end{gather}
for the flag of distributions on $\mathcal{R}_{m}$
\end{prop}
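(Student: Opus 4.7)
The plan is to prove, by a single induction on $i$, the stronger intermediate statement that $[V_1, \mathcal{J}^{(i)}] \subseteq \mathcal{J}^{(i)}$ for every $i$. From this both conclusions of the proposition follow quickly: the second directly, since $V_i \subseteq V_1$, and the first via the Cartan-formula identity relating $\overline{\sigma}$ to the Liouville form on sections of $H$.

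The induction rests on three preliminary facts about the case $i = 0$. First, $V_1 = \ker(T\pi)$ is involutive as the vertical distribution of the fibration $\pi \colon \mathcal{M} \to M$. Second, $[V_1, \mathcal{J}] \subseteq \mathcal{J}$: indeed $\mathcal{J} = \pi_*^{-1}(D)$ is generated as a $C^\infty(\mathcal{M})$-module by $V_1$ together with local lifts of a frame of $D$, and vertical vector fields are $\pi$-related to zero, so bracketing with such a lift produces a vertical vector field. Third, $[V_1, \mathcal{C}] \subseteq V_1 + \mathcal{C}$: the map $\pi_*|_{\mathcal{C}}$ is a fiberwise isomorphism onto a line sub-bundle $\pi_* \mathcal{C} \subseteq D$, so $\mathcal{C}$ admits local projectable sections $Z^{\mathrm{lift}}$; writing a general section $Z$ of $\mathcal{C}$ as $g Z^{\mathrm{lift}}$ and bracketing with $X \in V_1$ yields $X(g) Z^{\mathrm{lift}} + g [X, Z^{\mathrm{lift}}]$, the first term lying in $\mathcal{C}$ and the second in $V_1$ by projectability.

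The inductive step for the stronger statement proceeds as follows. Modulo $\mathcal{J}^{(i-1)}$, any section of $\mathcal{J}^{(i)}$ is of the form $[Z, W]$ with $Z \in \mathcal{C}$ and $W \in \mathcal{J}^{(i-1)}$, so for $X \in V_1$ the Jacobi identity
\[
  [X, [Z, W]] = [[X, Z], W] + [Z, [X, W]]
\]
decomposes the bracket into two terms. The inductive hypothesis places $[X, W]$ in $\mathcal{J}^{(i-1)}$, hence $[Z, [X, W]] \in [\mathcal{C}, \mathcal{J}^{(i-1)}] \subseteq \mathcal{J}^{(i)}$. By the third preliminary, $[X, Z] = X' + Z'$ with $X' \in V_1$ and $Z' \in \mathcal{C}$; a further application of the inductive hypothesis gives $[X', W] \in \mathcal{J}^{(i-1)}$, and $[Z', W] \in [\mathcal{C}, \mathcal{J}^{(i-1)}] \subseteq \mathcal{J}^{(i)}$ by definition, so $[[X, Z], W] \in \mathcal{J}^{(i)}$ as well.

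Once $[V_1, \mathcal{J}^{(i)}] \subseteq \mathcal{J}^{(i)}$ is established, the second involutivity follows immediately. For the first, take $X, Y \in V_i$; then $[X, Y] \in V_1$ by involutivity of $V_1$, and for any $W \in \mathcal{J}^{(i)}$ the identity $\overline{\sigma}(A, B) = -\mathfrak{s}([A, B])$ valid for sections $A, B$ of $H = \ker \mathfrak{s}$ reduces the required $\overline{\sigma}([X, Y], W) = 0$ to the claim $\mathfrak{s}([[X, Y], W]) = 0$; expanding by Jacobi and applying the stronger second involutivity twice places $[[X, Y], W]$ inside $\mathcal{J}^{(i)} \subseteq H = \ker \mathfrak{s}$. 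The main technical point is the third preliminary fact and its careful use inside the inductive step, namely the decomposition $[X, Z] = X' + Z'$ and its propagation through the Jacobi expansion; the rest is a mechanical application of the Jacobi identity.
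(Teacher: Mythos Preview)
Your ``stronger intermediate statement'' $[V_1,\mathcal{J}^{(i)}]\subseteq\mathcal{J}^{(i)}$ is \emph{false} for $i\geq 2$, so the induction cannot get off the ground. The error originates in your third preliminary: the characteristic line $\mathcal{C}$ does \emph{not} project to a line sub-bundle of $D$ on $M$. The projected direction $\pi_*\bigl(\mathcal{C}(\lambda)\bigr)\subset D\bigl(\pi(\lambda)\bigr)$ genuinely depends on the fiber point $\lambda$; indeed, the entire content of the abnormal-extremal foliation is that different covectors single out different directions in $D$. Concretely, with a local frame $X_1,X_2$ of $D$, $X_3=[X_1,X_2]$, $X_4=[X_1,X_3]$, $X_5=[X_2,X_3]$ and quasi-momenta $u_i=p\cdot X_i$, a generator of $\mathcal{C}$ is $Z=u_5\vec u_1-u_4\vec u_2$, whose projection $u_5X_1-u_4X_2$ rotates inside $D$ as $(u_4,u_5)$ varies along the fiber. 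Hence $\mathcal{C}$ admits no $\pi$-projectable sections, the claimed inclusion $[V_1,\mathcal{C}]\subseteq V_1+\mathcal{C}$ fails, and the decomposition $[X,Z]=X'+Z'$ on which your inductive step rests is unavailable.

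The failure is not a matter of presentation: a direct computation shows that $\mathcal{J}^{(2)}$ is spanned over $\mathcal{J}^{(1)}$ by $[Z,\vec u_3]\equiv u_5\vec u_4-u_4\vec u_5$, and bracketing this with the vertical field $\partial_{u_5}\in V_1$ produces $\vec u_4\pmod{V_1}$, which lies outside $\mathcal{J}^{(2)}$. Thus $[V_1,\mathcal{J}^{(2)}]\not\subseteq\mathcal{J}^{(2)}$. The proposition genuinely requires $V_i$ rather than $V_1$: the skew-orthogonality condition defining $V_i=\mathcal{J}_{(i)}\cap V_1$ is exactly what excludes the vertical directions (such as $\partial_{u_5}$ above) that violate the normalizing property. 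The paper itself does not supply a proof but cites Lemma~2 of \cite{doubrov2009local}; the argument there proceeds by a different route that respects this dependence on $i$.
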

Although the statement is only proven for distributions of maximal class in \cite{doubrov2009local}, the proof does not rely on maximality of class. In order to prove the main theorem, we shall need one more fact about the flag \eqref{calJFlag}. 

The following lemma is a small extension of Remark 2 of \cite{doubrov2009local}. In that remark, only one inclusion of the equality \eqref{lemma Alt J_i} is demonstrated. Later in that paper, the equality is shown assuming maximality of class. We provide a detailed proof here because the lemma is crucial in the proof of Theorem \ref{max_class_conjecture}.
\begin{lemma}
    \label{lemma Alt J_i}
    For each $1\leq i \leq m-1$ and each $\lambda\in \mathcal{R}_{m}$, we have that
    \begin{equation}
        \label{Alt J_i}
         \mathcal{J}_{(i)}(\lambda)+ [\mathcal{C},\mathcal{J}_{(i)}](\lambda) = \mathcal{J}_{(i-1)}(\lambda)
    \end{equation}
    Further,
    \begin{equation}
        \label{Alt J_m}
          [\mathcal{C},\mathcal{J}_{(m)}] (\lambda) \subseteq \mathcal{J}_{(m)}(\lambda).
    \end{equation}
\end{lemma}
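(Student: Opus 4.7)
The plan is to derive both inclusions of \eqref{Alt J_i} and the containment \eqref{Alt J_m} from a single Cartan-type identity applied to a local 1-form $\theta$ representing the contact conformal class $\oline{\mathfrak{s}}$ (so that $H=\ker\theta$ and $\sigma := d\theta|_H$ represents $\oline{\sigma}$). Two preliminary facts will be needed. First, for local sections $X$ of $\mathcal{C}$ and $V$ of $H$, Cartan's formula gives $\theta([X,V])=-d\theta(X,V)=0$, since $X\in \mathcal{C}=\ker(d\theta|_H)$; hence $[X,V]\in H$ and it makes sense to evaluate $\sigma([X,V],W)$. Second, $L_X\theta = X\lrcorner d\theta$ vanishes on $H$ and is therefore of the form $\alpha\theta$ for some smooth function $\alpha$, which gives $(L_X d\theta)|_{H\times H}=\alpha\,\sigma$. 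Plugging this into the Lie derivative identity $(L_X\omega)(V,W)=X(\omega(V,W))-\omega([X,V],W)-\omega(V,[X,W])$ with $\omega=d\theta$ and $V,W\in H$ yields the master identity
\[
\sigma([X,V],W)+\sigma(V,[X,W]) \;=\; X\bigl(\sigma(V,W)\bigr)-\alpha\,\sigma(V,W),
\]
valid for any local sections $V,W$ of $H$ and $X$ of $\mathcal{C}$.

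The forward inclusion in \eqref{Alt J_i} and the entire statement \eqref{Alt J_m} both follow from this identity by the same substitution. Pick $V$ a local section of $\mathcal{J}_{(i)}$ and $W$ a local section of $\mathcal{J}^{(i-1)}$ for \eqref{Alt J_i}, or of $\mathcal{J}^{(m)}$ for \eqref{Alt J_m}. In either case $W$ lies in $\mathcal{J}^{(i)}$ (respectively $\mathcal{J}^{(m)}$), so by the definition of $\mathcal{J}_{(i)}$ (respectively $\mathcal{J}_{(m)}$) as the skew complement with respect to $\oline{\sigma}$, $\sigma(V,W)\equiv 0$ near $\lambda$, killing the right-hand side of the master identity. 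Moreover $[X,W]\in \mathcal{J}^{(i)}$ by the recursive definition \eqref{geod_flag} in the first case, and $[X,W]\in \mathcal{J}^{(m)}$ by \eqref{X_stab} in the second, so $\sigma(V,[X,W])=0$ as well. Hence $\sigma([X,V],W)(\lambda)=0$ for every admissible $W$, and by the definition of $\mathcal{J}_{(i-1)}$ (respectively $\mathcal{J}_{(m)}$) as a skew complement, $[X,V](\lambda)\in \mathcal{J}_{(i-1)}(\lambda)$ (respectively $\mathcal{J}_{(m)}(\lambda)$). This establishes \eqref{Alt J_m} and the inclusion ``$\subseteq$'' of \eqref{Alt J_i}.

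For the reverse inclusion in \eqref{Alt J_i}, I argue by a dimension count and contradiction. Since $\dim\mathcal{J}_{(i-1)}(\lambda)-\dim\mathcal{J}_{(i)}(\lambda)=1$ by \eqref{flag dims}, the already established inclusion forces $\mathcal{J}_{(i)}(\lambda)+[\mathcal{C},\mathcal{J}_{(i)}](\lambda)$ to equal either $\mathcal{J}_{(i-1)}(\lambda)$ or $\mathcal{J}_{(i)}(\lambda)$. Suppose for contradiction that the latter holds, so that $[X,V](\lambda)\in \mathcal{J}_{(i)}(\lambda)$ for every pair of sections. Apply the master identity again, this time with $W$ a local section of $\mathcal{J}^{(i)}$: the right-hand side still vanishes because $V\in \mathcal{J}_{(i)}$ is $\oline{\sigma}$-orthogonal to $\mathcal{J}^{(i)}$, and by our assumption $\sigma([X,V],W)(\lambda)=0$. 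The identity therefore yields $\sigma\bigl(V(\lambda),[X,W](\lambda)\bigr)=0$ for every such $W$. Because $\mathcal{J}^{(i+1)}(\lambda)=\mathcal{J}^{(i)}(\lambda)+\{[X,W](\lambda): W\text{ section of }\mathcal{J}^{(i)}\}$ by \eqref{geod_flag}, this combined with $\sigma(V,\mathcal{J}^{(i)})=0$ gives $V(\lambda)\in \mathcal{J}_{(i+1)}(\lambda)$, whence $\mathcal{J}_{(i)}(\lambda)\subseteq \mathcal{J}_{(i+1)}(\lambda)$. But $i\leq m-1$ ensures $\mathcal{J}_{(i+1)}$ is still part of the flag \eqref{calJFlag} and its dimension is strictly smaller by \eqref{flag dims}, a contradiction. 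The main technical point I expect to require care is the conformal factor $\alpha$ in the master identity; it turns out to be harmless because at every substitution $W$ is chosen $\sigma$-orthogonal to $V$ from the outset, annihilating the right-hand side before $\alpha$ ever enters the picture.
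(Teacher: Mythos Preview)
Your proof is correct and follows essentially the same approach as the paper: both rely on the identity $\sigma([X,V],W)+\sigma(V,[X,W])=0$ whenever $\sigma(V,W)\equiv 0$ (the paper states it in this specialized form directly, while you derive the full Lie-derivative identity first and then observe the conformal factor is irrelevant). The only cosmetic difference is in the reverse inclusion of \eqref{Alt J_i}: the paper exhibits a specific $Y\in\mathcal{J}_{(i)}$ with $[X,Y](\lambda)\notin\mathcal{J}_{(i)}(\lambda)$ by first choosing $Z\in\mathcal{J}^{(i)}$ with $[X,Z](\lambda)\notin\mathcal{J}^{(i)}(\lambda)$, whereas you argue the contrapositive, showing that failure would force $\mathcal{J}_{(i)}(\lambda)\subseteq\mathcal{J}_{(i+1)}(\lambda)$---but these are the same argument read in opposite directions.
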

\begin{proof}
    Fix $\lambda\in \mathcal{R}_{m}$ and choose a nonvanishing local section $X$ of $\mathcal{C}$ near $\lambda$. Also choose a skew-symmetric 2-form $\sigma$ on $\mathcal{R}_{m}$ from the conformal class $\oline{\sigma}$ defined in Section \ref{subsec: Char Line Distr}. Since the Lie derivative $L_X\sigma=f\sigma$ for some $f\in C^\infty(\mathcal{R}_{m})$, we have for any sections $Y$ and $Z$ of the contact distribution $H$ satisfying $\sigma(Y,Z)\equiv 0$ that
    \[
        \sigma\big([X,Y],Z\big) = -\sigma\big(Y,[X,Z]\big)
    \]
    
    Now fix $1\leq i \leq m-1$; let us begin with the rightward inclusion of \eqref{Alt J_i}. Fix a section $Y$ of $\mathcal{J}_{(i)}$ and a section $Z$ of $\mathcal{J}^{(i-1)}$. Since $\mathcal{J}_{(i)}\subseteq \mathcal{J}_{(i-1)}$, we have $\sigma(Y,Z)\equiv 0$, so that
    \[
        \sigma\big([X,Y],Z\big) = -\sigma\big(Y,[X,Z]\big) = 0
    \]
    where the last equality holds because $[X,Z]$ is a section of $\mathcal{J}^{(i)}$. This demonstrates the rightward inclusion for \eqref{Alt J_i}.

    For the leftward inclusion of (\ref{Alt J_i}), we show for arbitrary $\lambda\in \mathcal{R}_{m}$ that
    \begin{equation}
    \label{[C,J_i] not in J_i}
        \mathcal{J}_{(i)}(\lambda)+ [\mathcal{C},\mathcal{J}_{(i)}](\lambda) \not\subseteq \mathcal{J}_{(i)}(\lambda).
    \end{equation}
    so that the conclusion follows by the rightward inclusion of \eqref{Alt J_i} and \eqref{flag dims}. By \eqref{flag dims}, we can choose a section $Z$ of $\mathcal{J}^{(i)}$ so that $[X,Z](\lambda)\in \mathcal{J}^{(i+1)}(\lambda)\setminus\mathcal{J}^{(i)}(\lambda)$. Since $[X,Z](\lambda)\notin \mathcal{J}^{(i)}$, we can find a section $Y$ of $\mathcal{J}_{(i)}$ so that $\sigma\big(Y(\lambda),[X,Z](\lambda)\big) \neq 0$. Since $\sigma(Y,Z)\equiv 0$, we then have
    \[
        \sigma\big([X,Y](\lambda),Z(\lambda)) = -\sigma\big(Y(\lambda),[X,Z](\lambda)\big) \neq 0
    \]
    Thus $[X,Y](\lambda)$ is not in $\big(\mathcal{J}^{(i)}(\lambda)\big)^\angle=\mathcal{J}_{(i)}(\lambda)$, and we have proven \eqref{Alt J_i}.

    Finally, let us prove \eqref{Alt J_m}. For any section $Y$ of $\mathcal{J}_{(m)}$ and any section $Z$ of $\mathcal{J}^{(m)}$, we have that $\sigma(Y,Z)\equiv 0$, so that
    \[
        \sigma\big([X,Y],Z\big) = -\sigma\big(Y,[X,Z]\big) = 0
    \]
    where the last equality follows because $[\mathcal{C},\mathcal{J}^{(m)}]\subseteq \mathcal{J}^{(m)}$.
\end{proof}

\section{Proof of the Theorem \ref{max_class_conjecture}}
\label{proof_sec} 

As was noted at the end of section \ref{sectionSympD}, the class of a rank 2 distribution is constant in a neighborhood of a generic point of the base manifold. Therefore to prove the first sentence of Theorem \ref{max_class_conjecture}, it suffices to prove the following 
\begin{theorem}
\label{class_constant} If a bracket generating rank $2$ distribution with $5$-dimensional cube has constant class, then it is of maximal class. 
\end{theorem}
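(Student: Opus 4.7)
The plan is to argue by contradiction: assume the constant class $m$ of $D$ satisfies $m < n - 3$; by Lemma~\ref{lemma: minimal class}, the $5$-dimensional cube hypothesis forces $m \geq 2$. The goal is to extract from the class-$m$ flag \eqref{calJFlag} on $\mathcal{R}_m$ a proper involutive subbundle of $TM$ strictly containing $D$, contradicting bracket generation.

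The central object is $\mathcal{J}^{(m)}$ itself. It is stable under bracketing with $\mathcal{C}$ by definition of class $m$, and with $V_m$ by Proposition~\ref{prop: involutivity}. Moreover $\mathcal{J}^{(m)} \supseteq \mathcal{J} \supseteq V_1 = \ker T\pi$, so $\mathcal{J}^{(m)}$ contains the full vertical distribution of $\pi$, and its would-be push-forward has rank $\dim \mathcal{J}^{(m)} - \dim V_1 = m + 2$. If one can show
\begin{equation}
 \label{claim_inv}
 [V_1, \mathcal{J}^{(m)}] \subseteq \mathcal{J}^{(m)} \quad \text{and} \quad [\mathcal{J}^{(m)}, \mathcal{J}^{(m)}] \subseteq \mathcal{J}^{(m)},
\end{equation}
then $\mathcal{J}^{(m)}$ descends to an involutive subbundle $\pi_*\mathcal{J}^{(m)} \subseteq TM$ of rank $m + 2$. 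Since $m + 2 < n - 1$ and $\pi_*\mathcal{J}^{(m)} \supseteq D$, this is a proper involutive subbundle of $TM$ containing $D$, contradicting bracket generation.

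Both inclusions in \eqref{claim_inv} are to be proved by induction along the flag \eqref{calJFlag}. The base cases use the identifications $\mathcal{J} = \pi^{-1}(D)$ and $\mathcal{J}^{(1)} = \pi^{-1}(D^2)$ from Section~\ref{sectionSymp}: the first immediately yields $[V_1, \mathcal{J}] \subseteq \mathcal{J}$, while $\pi_*[\mathcal{J}, \mathcal{J}] \subseteq [D,D] = D^2$ gives $[\mathcal{J}, \mathcal{J}] \subseteq \pi^{-1}(D^2) = \mathcal{J}^{(1)}$. The inductive steps combine the recursion $\mathcal{J}^{(i)} = \mathcal{J}^{(i-1)} + [\mathcal{C}, \mathcal{J}^{(i-1)}]$, its skew-dual $\mathcal{J}_{(i-1)} = \mathcal{J}_{(i)} + [\mathcal{C}, \mathcal{J}_{(i)}]$ from Lemma~\ref{lemma Alt J_i}, the Jacobi identity, and crucially the class-$m$ saturation $\mathcal{J}^{(c)} = \mathcal{J}^{(m)}$ for all $c \geq m$, which absorbs brackets that would otherwise leave $\mathcal{J}^{(m)}$.

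The main obstacle is that a direct induction on \eqref{claim_inv} produces Jacobi terms of no smaller inductive complexity: for example, expanding $[V_1, [\mathcal{C}, \mathcal{J}^{(i)}]]$ by the Jacobi identity yields a factor $[[V_1, \mathcal{C}], \mathcal{J}^{(i)}]$, and the best a priori estimate $[V_1, \mathcal{C}] \subseteq \mathcal{J}$ (which follows from the fact that $\pi_*\mathcal{C} \subseteq D$ varies with the fiber point while always taking values in $D$) only gives $\subseteq [\mathcal{J}, \mathcal{J}^{(i)}]$, which is no easier than the target. To close the induction one uses the classical reduction
\begin{equation}
 \label{ad_identity}
 [\mathrm{ad}_X^a Y, \mathrm{ad}_X^b Z] = \sum_{i=0}^{a} (-1)^i \binom{a}{i}\, \mathrm{ad}_X^{a-i}\bigl[Y, \mathrm{ad}_X^{i+b} Z\bigr]
\end{equation}
applied with $X \in \mathcal{C}$ and $Y, Z$ in $\mathcal{J}$ or $V_1$, which reduces every bracket of the form $[\mathcal{J}^{(a)}, \mathcal{J}^{(b)}]$ or $[V_1, \mathcal{J}^{(b)}]$ to ones of the form $[\mathcal{J}, \mathcal{J}^{(c)}]$ or $[V_1, \mathcal{J}^{(c)}]$, reachable by a single one-parameter induction; the class-$m$ saturation then forces the result into $\mathcal{J}^{(m)}$. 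This is the technical heart of the argument, and is where working on the dense open subset $\mathcal{R}_m$ on which the class is constant plays an essential role.
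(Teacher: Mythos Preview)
The central claim \eqref{claim_inv} is false, and this kills the argument. Take any section $\ve_1$ of $V_{m-1}\subseteq V_1$ with $\ve_1(\lambda)\notin\mathcal{J}_{(m)}(\lambda)$, and any section $\ve_{2m}$ of $\mathcal{J}^{(m)}$ with $\ve_{2m}(\lambda)\notin\mathcal{J}^{(m-1)}(\lambda)$. Since $\ve_1\in\mathcal{J}_{(m-1)}=\bigl(\mathcal{J}^{(m-1)}\bigr)^{\angle}$ but $\ve_1\notin\mathcal{J}_{(m)}=\bigl(\mathcal{J}^{(m)}\bigr)^{\angle}$, one has $\sigma(\ve_1,\ve_{2m})\neq 0$. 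For any representative $\mathfrak{s}$ of the contact form on $\mathcal{M}$ and sections $Y,Z$ of $H$ one has $\mathfrak{s}([Y,Z])=-d\mathfrak{s}(Y,Z)=-\sigma(Y,Z)$, so $\mathfrak{s}\bigl([\ve_1,\ve_{2m}]\bigr)\neq 0$, i.e.\ $[\ve_1,\ve_{2m}]\notin H$. In particular $[\ve_1,\ve_{2m}]\notin\mathcal{J}^{(m)}$, so already the first inclusion $[V_1,\mathcal{J}^{(m)}]\subseteq\mathcal{J}^{(m)}$ fails, for every value of $m$. Consequently $\mathcal{J}^{(m)}$ is not $\pi$-projectable, and the proposed descent to an involutive subbundle of $TM$ does not exist. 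The identity \eqref{ad_identity} cannot save the induction: with $Y=\ve_1\in V_1$ and $Z\in\mathcal{J}$ it produces exactly terms of the form $\mathrm{ad}_X^{a}[\ve_1,\ve_{c}]$, and the case $c=2m$ already escapes $\mathcal{J}^{(m)}$.

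There is a second, structural warning sign: your inductive scheme never invokes the hypothesis $m\geq 2$ (equivalently $\dim D^3=5$) beyond the opening sentence, yet Lemma~\ref{lemma: minimal class} shows the conclusion is false for $m=1$. Any correct argument must use $m\geq 2$ in an essential way. The paper does this by accepting that $\eta:=[\ve_1,\ve_{2m}]$ genuinely escapes $\mathcal{J}^{(m)}$ and instead proving that the one-larger distribution $\mathcal{J}^{(m)}\oplus\langle\eta\rangle$ is involutive. The key step is a numerical cancellation: computing $[\ve_2,\ve_{2m}]$ two different ways modulo $\mathcal{J}^{(m)}\oplus\langle\eta\rangle$ gives both $[X,\eta]$ and $(1-m)[X,\eta]$, forcing $[X,\eta]\in\mathcal{J}^{(m)}\oplus\langle\eta\rangle$ precisely because $m\neq 1$. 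Since the bracket-generating distribution $E:=\mathcal{J}_{(m-1)}$ has $E^{2m+1}=\mathcal{J}^{(m)}\oplus\langle\eta\rangle$, comparing $\dim T\mathcal{R}_m=2n-4$ with $\dim E^{2m+1}=n+m-1$ yields $m=n-3$. You should aim for involutivity one step above $\mathcal{J}^{(m)}$, not at $\mathcal{J}^{(m)}$ itself.
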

\begin{proof} Assume that $D$ has constant class $m$. Since $\rank D^3 = 5$, we have by Lemma \ref{lemma: minimal class} that $m>1$. Consider again the flag \eqref{calJFlag} on $\mathcal{R}_{m}$. Each piece of this flag has constant rank, as does $V_i$ for each $0\leq i \leq m$. Now fix a $\lambda_0$ in $\mathcal{R}_{m}$. Let 
\begin{equation}
\label{E_def}
E:=\mathcal J_{(m-1)}.
\end{equation}

Choose a nonvanishing section $X$ of $\mathcal{C}$ near $\lambda_0$. By \eqref{J_i = C + V_i}, we have that $E=\mathcal{C}\oplus V_{m-1}$. Choose a section $\ve_1$ of $V_{m-1}$ so that $[X,\ve_1](\lambda_0) \notin\mathcal{J}_{(m-1)}(\lambda_0)$. Write
\[
    \mathrm{pr}_V: \mathcal{J}_{(1)}=\mathcal{C}\oplus V_1\to V_1
\]
for the projection onto the vertical subspace $V_1$ parallel to $\mathcal{C}$. For each $2\leq i \leq 2m$, define 
\begin{equation}
    \ve_i= \begin{cases}
        \mathrm{pr}_V([X,\ve_{i-1}]) & \text{if }2\leq i \leq m-1
        \\
        [X,\ve_{i-1}] &\text{if }m\leq i\leq 2m
    \end{cases}
\end{equation}
Then
\begin{align}
    &\mathcal{J}_{(i)} = \mathcal{J}_{(m)}\oplus\langle \ve_1,\ldots, \ve_{m-i}\rangle\text{ for all }0\leq i \leq m
    \\
    &\mathcal{J}^{(i)} = \mathcal{J}_{(m)}\oplus \langle \ve_1,\ldots, \ve_{m+i}\rangle\text{ for all }0\leq i \leq m
\end{align}
Notice that by Proposition \ref{prop: involutivity}, we have 
\begin{equation}
    \label {w_i involutivity}
    V_{i} = V_{m}\oplus \langle\ve_1,\ldots, \ve_{m-i}\rangle\text{ is involutive for each }1\leq i \leq m
\end{equation}
Since $D$ is bracket generating, so too is the distribution $\mathcal{J}_{(0)}$ ($=\mathcal J$) on $T\mathcal{R}_{m}$, as $\mathcal J$ is the lift of $D$. By \eqref{Alt J_i}, this implies that the distribution $E$, defined by \eqref{E_def}, is also bracket generating. We claim that
\begin{equation}
\label{E osc flag}
    E^{i} = \begin{cases}
        \mathcal{J}_{(m-i)} & \text{for }1\leq i \leq m
        \\
        \mathcal{J}^{(i-m)}& \text{for }m+1\leq i \leq 2m
    \end{cases}
\end{equation}
The claim holds for $i=1$ by construction of $E$. To prove the claim for $2\leq i\leq m$, use induction, relations \eqref{J_i = C + V_i}, \eqref{vert_inclusion}, \eqref{Alt J_i}, and the first involutivity condition \eqref{invol1} of Proposition \ref{prop: involutivity}, to get:
\begin{gather*}
    E^i = E^{i-1}+[E,E^{i-1}] 
    = \mathcal{J}_{(m-i+1)}+[\mathcal{J}_{(m-1)},\mathcal{J}_{(m-i+1)}] 
    \\
\stackrel{\eqref{J_i = C + V_i}}{=} \mathcal{J}_{(m-i+1)}+[\mathcal{C}\oplus V_{m-1}, \mathcal{C}\oplus V_{m-i+1}] \stackrel{\eqref{vert_inclusion} \&\eqref{invol1}}{=} \mathcal{J}_{(m-i+1)}+[\mathcal C,\mathcal{J}_{(m-i+1)}]\stackrel{\eqref{Alt J_i}}{=}
\mathcal{J}_{(m-i)},
\end{gather*}
where in the first line we used the induction hypothesis. Similarly, for each $m+1\leq i \leq 2m$, use induction, relations \eqref{J_i = C + V_i}, \eqref{vert_inclusion}, \eqref{geod_flag}, and the second involutivity condition \eqref{invol2} of Proposition \ref{prop: involutivity} to get
\begin{gather*}
    E^i = E^{i-1}+[E,E^{i-1}] = \mathcal{J}^{(-m+i-1)} + [\mathcal{J}_{(m-1)},\mathcal{J}^{(-m+i-1)}] 
    \\
    \stackrel{\eqref{J_i = C + V_i}}{=}\mathcal{J}^{(-m+i-1)} + [\mathcal{C}\oplus V_{m-1},\mathcal{J}^{(-m+i-1)}] \stackrel{\eqref{vert_inclusion} \&\eqref{invol2}}{=}\mathcal{J}^{(-m+i-1)} + [\mathcal{C},\mathcal{J}^{(-m+i-1)}] \stackrel{\eqref{geod_flag}}{=}\mathcal{J}^{(-m+i)},
\end{gather*}
where again in the first line we used the induction hypothesis.
This demonstrates \eqref{E osc flag}.

Now define 
\begin{equation}
\label{eta_def}
\eta = [\ve_1,\ve_{2m}].
\end{equation}
Because $\ve_1(\lambda)\notin\mathcal{J}_{(m)}(\lambda)= \big(\mathcal{J}^{(m)}(\lambda)\big)^\angle$ for each $\lambda\in\mathcal{M}$, we have that $\eta(\lambda)\notin H(\lambda)$, where $H$ is the even contact distribution defined in \eqref{evencontact_H}. Therefore, $\eta(\lambda)\notin\mathcal{J}^{(m)}(\lambda)\subseteq H(\lambda)$. Since $D$ has constant class $m$, we have that $[\mathcal{C},\mathcal{J}^{(m)}]\subseteq \mathcal{J}^{(m)}$. Along with the second involutivity condition \eqref{invol2} of Proposition \ref{prop: involutivity}, this implies
\[
    E^{2m+1} = [E,\mathcal{J}^{(m)}] = [V_{m}\oplus \langle X,\ve_1\rangle, \mathcal{J}^{(m)}] = \mathcal{J}^{(m)}\oplus \langle \eta\rangle.
\]

We ultimately aim to show that 
\[
    [E,E^{2m+1}]\subseteq E^{2m+1},
\]
so that $E^{2m+1}$ is involutive. To demonstrate this, we prove a sequence of lemmas.

\begin{lemma}
    \label{lemma (i)}
    For each $1\leq i\leq m$,
    \[
        [\ve_i,\ve_{2m+1-i}] \equiv (-1)^{i+1}\eta \mod E^{2m}
    \]
\end{lemma}

\begin{proof}
The claim holds by definition \eqref{eta_def} of $\eta$ for $i=1$ . Now for the induction step, apply the Jacobi identity to obtain
\begin{align*}
    [\ve_{i},\ve_{2m+1-i}] &\equiv \big[[X,\ve_{i-1} \text{ mod } \langle X \rangle],\ve_{2m+1-i}\big] 
    \\
    &\equiv \big[X,[\ve_{i-1},\ve_{2m+1-i}]\big]+ \big[[X,\ve_{2m+1-i}],\ve_{i-1}\big]
    \\ 
    &\equiv \big[X,[\ve_{i-1},\ve_{2m+1-i}]\big] + [\ve_{2m+2-i},\ve_{i-1}] 
    \\
    &\equiv \big[X,[\ve_{i-1},\ve_{2m+1-i}]\big] + (-1)^{i+1}\eta
    \\
    &\equiv (-1)^{i+1}\eta
    \mod E^{2m}
\end{align*}
where the last equivalence holds because, first, \eqref{invol2} implies $[\ve_{i-1},\ve_{2m+1-i}]\subset \mathcal{J}^{(m)}$ and, second, by \eqref{Alt J_m}, we have
\[
    \big[X,[\ve_{i-1},\ve_{2m+1-i}]\big]\subseteq [\mathcal{C},\mathcal{J}^{(m)}]\subseteq \mathcal{J}^{(m)} = E^{2m}.
\]
\end{proof}

\begin{lemma}
    \label{lemma (ii)}
    \[
        [\ve_2,\ve_{2m}] \equiv (1-m)[X,\eta]\mod E^{2m+1}
    \]
\end{lemma}
\begin{proof}
    Observe that for each $2\leq i \leq m+1$, the Jacobi identity and Lemma \ref{lemma (i)} give
    
    \begin{align*}
        [\ve_i,\ve_{2m+2-i}] &\equiv \big[\ve_{i},[X,\ve_{2m+1-i}]\big]
        \\
        &\equiv \big[[\ve_{i},X],\ve_{2m+1-i}\big] + \big[X,[\ve_i,\ve_{2m+1-i}]\big]
        \\
        &\equiv -[\ve_{i+1} \text{ mod } \langle X\rangle ,\ve_{2m+1-i}] + (-1)^{i+1}[X,\eta] \mod E^{2m+1}
        \\
        &\equiv -[\ve_{i+1},\ve_{2m+1-i}] + (-1)^{i+1}[X,\eta] \mod E^{2m+1}
    \end{align*}
    It is then easy to show by induction that for each $2\leq i \leq m+1,$
    \begin{align*}
        [\ve_2,\ve_{2m}] &\equiv (-1)^i[\ve_i,\ve_{2m+2-i}] - (i-2)[X,\eta]\mod E^{2m+1}
    \end{align*}
    In particular, for $i=m+1,$ we obtain the result, 
    as $[\ve_{m+1}, \ve_{m+1}]=0$.
\end{proof}
\begin{remark}
Note that for distributions of minimal class $m=1$, so that Lemma \ref{lemma (ii)} becomes trivial, and cannot be used to prove the next lemma. Based on Lemma \ref{lemma: minimal class}, this is the place, where we use the condition that $\rank \, D^3=5$.
\end{remark}
\begin{lemma}
    \label{lemma (iii)}
    \[
        [X,\eta]  \subset E^{2m+1}
    \]
\end{lemma}
\begin{proof}
    We again apply the Jacobi identity to obtain
    \begin{align*}
        [\ve_2,\ve_{2m}] &\equiv \big[[X,\ve_1]\text{ mod } \langle X\rangle,\ve_{2m}\big]
        \\
        & \equiv \big[[X,\ve_{2m}],\ve_1\big] + \big[X,\underbrace{[\ve_1,\ve_{2m}]}_{\eta \text { by \eqref{eta_def}}}\big]
        \\
        & \equiv [X,\eta]\mod E^{2m+1},
    \end{align*}
    where we use in each equivalence the fact that $[X, \ve_{2m}] \subset E^{2m}$, which follows from \eqref{X_stab}. Using Lemma \ref{lemma (ii)}, this then implies that 
    \begin{equation*}
        (1-m)[X,\eta] \equiv [X,\eta]\mod E^{2m+1}.
    \end{equation*}
    Since $m>1$,
    we have that $[X,\eta] \subset E^{2m+1}$.
\end{proof}

\begin{lemma}
    \label{lemma (iv)}
    \[
        [V_{m},E^{2m+1}]\subseteq E^{2m+1}
    \]
\end{lemma}
\begin{proof}
    Recall that $E^{2m+1} = \mathcal{J}^{(m)}\oplus \langle \eta\rangle$; the second involutivity condition \eqref{invol2} of Proposition \ref{prop: involutivity} gives that $[V_{m},\mathcal{J}^{(m)}]\subseteq E^{2m+1}$. Also, applying both involutivity conditions \eqref{invol1}-\eqref{invol2} and the Jacobi identity give
    \begin{align*}
        [V_{m},\eta] &= \big[V_{m},[\ve_1,\ve_{2m}]\big] 
        \\
        &\subseteq \big[[V_{m},\ve_1],\ve_{2m}\big] + \big[\ve_1,[V_{m},\ve_{2m}]\big] 
        \\
        & \subseteq \big[[V_{m},V_{m-1}],\mathcal{J}^{(m)}\big] + \big[\ve_1,[V_{m},\mathcal{J}^{(m)}]\big]
        \\
        &\subseteq [V_{m-1},\mathcal{J}^{(m)}] + [\mathcal{J}_{(m-1)},\mathcal{J}^{(m)}]
        \\
        &= [E,E^{2m}] \subseteq E^{2m+1}.
    \end{align*}
\end{proof}
    
\begin{lemma}
    \label{lemma (vi)}
    \[
        [\ve_1,\eta] \subset E^{2m+1}
    \]
\end{lemma}
\begin{proof}
    Note that by the involutivity condition \eqref{invol1} of Proposition \ref{prop: involutivity} with $i=m-2$,
    \begin{equation}
        [\ve_1,\ve_2]  \subset E^2.
    \end{equation}
    Similarly, by the involutivity condition \eqref{invol2} with $i=m-1$, 
    \begin{equation}
        [\ve_1,\ve_{2m-1}] \subset E^{2m-1}
    \end{equation}
    Therefore, applying Lemma \ref{lemma (i)} then the Jacobi identity yields
    \begin{align}
        [\ve_1,\eta] &\equiv -\big[\ve_1,[\ve_2,\ve_{2m-1}] \text{ mod } E^{2m}\big]
        \\
        & \equiv -\big[[\ve_1,\ve_2],\ve_{2m-1}\big] - \big[\ve_2,[\ve_1,\ve_{2m-1}]\big]
        \\
        &\equiv 0 \mod E^{2m+1}.
    \end{align}
\end{proof}
 
Now, in order to prove the theorem, note that
\begin{align*}
    [E,E^{2m+1}] &= E^{2m+1} + [E,\langle \eta\rangle] 
    \\
    &= E^{2m+1} + [\langle X\rangle,\langle \eta\rangle] + [V_{m},\langle \eta\rangle] + [\langle \ve_1\rangle,\langle \eta\rangle].
\end{align*}
However, the terms on the right-hand side are included in $E^{2m+1}$ by Lemmas \ref{lemma (iii)}, \ref{lemma (iv)}, and \ref{lemma (vi)}, respectively. Therefore, $[E,E^{2m+1}]\subseteq E^{2m+1}$, and the distribution $E^{2m+1}$ is involutive.

Because $E$ is bracket generating, this implies that $E^{2m+1}=T\mathcal{R}_m$. Comparing ranks, we have that
\[
    \text{Rank}(E^{2m+1}) = n+m-1 = \text{Rank}(T\mathcal{R}_m) = 2n-4.
\]
Therefore, $m=n-3$, and $D$ has maximal class at $p$, which completes the proof of Theorem \ref{class_constant} and therefore the first sentence of Theorem \ref{max_class_conjecture}.
\end{proof}
Now prove the second sentence of Theorem \ref{max_class_conjecture}. Let $q_0$ be a point where $D$ is equiregular and assume that $\mu$ is the minimal integer such that $D^\mu(q_0)=T_{q_0}M$. Denote
\begin{equation}
\label{Tanaka_comp}
\mathfrak{g}_{-1}(q_0):=D(q_0), \quad \mathfrak{g}_{-i}(q_0) : =D^{i}(q_0)/D^{i-1}(q_0), \,\, \forall 1\leq j\leq \mu,
\end{equation}
then the graded space
%\begin{equation}
%\label{symbdef}
$\mathfrak{m}(q_0)=
\displaystyle{\bigoplus_{j=-\mu}^{-1}\mathfrak{g}_j(q_0)}$,
%\end{equation}
associated with the filtration \eqref{week_derived}, is endowed with the structure of the graded nilpotent Lie algebra, called the \emph{Tanaka symbol of the distribution $D$ at the point $q_0$}. The \emph{flat distribution $D_{\mathfrak m(q_0)}$ of constant symbol (or type) $\mathfrak m(q_0)$} is defined to be the left-invariant distribution corresponding to the $(-1)$-graded component $\mathfrak g_{-1}(q_0)$
on the simply connected Lie group with Lie algebra $\mathfrak{m}(q_0)$.
Since the flat distribution $D_{\mathfrak m(q_0)}$ is left-invariant, its germs at different points are equivalent \footnote{Note that for non-equiregular points, there is a notion of flat distribution (or nilpotent approximation) as well (\cite{Bellaiche, Jean_book, Ignatovich, Zelenko_review}): it can be seen as a distribution on a homogeneous space, but in contrast to equiregular case there are pairs of points at which the germs of the distribution are not equivalent, so the present arguments do not work.}, and therefore $D_{\mathfrak m(q_0)}$ has constant class. Therefore, by Theorem \ref{class_constant} it is of maximal class at every point.
Then by \cite[Proposition 2.3.2]{Wendell}, the original distribution $D$ at $q_0$ is of maximal class.

\section{Proof of Theorem \ref{corank1_abnormal_thm}}
\label{corank_sec}

We are going to prove the following stronger theorem:
\begin{theorem}
\label{corank1_abnormal_thm_reg}
    Let $D$ be a bracket generating rank $2$ distribution with $5$-dimensional cube on a manifold $M$ and let $D$ have maximal class at $q_0\in M$; i.e., $m(q_0)=n-3$ (the set of such points is generic by Theorem \ref {max_class_conjecture} and contains all points where the distribution is equiregular). Then there exists a regular abnormal extremal trajectory of corank $1$ starting at $q_0$. Moreover, the set of regular abnormal extremal trajectories of corank $1$ starting at $q_0$, considered as unparametrized curves, is not only nonempty but also open and dense—in fact, Zariski open—in the space of regular abnormal extremal trajectories starting at $q_0$, under the identification of this space with the space $\mathbb P\Bigl((D^2)^\perp(q_0)\backslash (D^3)^\perp(q_0)\Bigr)$ (here $(D^\ell)^\perp(q_0):=(D^\ell)^\perp\cap T_{q_0}^*M$).
\end{theorem}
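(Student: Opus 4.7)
The plan is to first establish the key relation~\eqref{diff_end_point_jac} anticipated after Definition~\ref{reg_abn_def}, asserting the inclusion
\[
\pi_*\!\bigl(\mathcal{J}^{(\nu(\lambda(T)))}(\lambda(T))\bigr)\;\subseteq\;\operatorname{Im}\bigl((dF_{q_0,T})_\gamma\bigr)
\]
for any regular abnormal extremal $\lambda(\cdot)\subset\mathcal{M}$ projecting to $\gamma=\pi\circ\lambda|_{[0,T]}$, and then to combine it with Theorem~\ref{max_class_conjecture} and the lower semicontinuity of $\nu$ to isolate a Zariski-open dense set of lifts $\lambda_0\in\pi^{-1}(q_0)$ whose corresponding trajectories have corank $1$. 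Before this, I would verify the promise made between Definitions~\ref{reg_abn_def} and~\ref{abn_def}: by Pontryagin's maximum principle, an integral curve of $\mathcal{C}$ lifts to a covector curve in $(D^2)^\perp\setminus (D^3)^\perp\subset T^*M$ whose endpoint is an abnormal Lagrange multiplier of $\gamma$, so $\lambda(T)$ annihilates $\operatorname{Im}\,dF_{q_0,T}$; in particular $\mathrm{corank}(\gamma)\ge 1$.

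The proof of the key inclusion proceeds by induction on $i$ with $0\le i\le\nu(\lambda(T))$. At $i=0$, needle variations of the horizontal controls concentrated near time $T$ realize $D(\gamma(T))=\pi_*\bigl(\mathcal{J}(\lambda(T))\bigr)$. For the inductive step, given a variation whose endpoint derivative realizes a direction in $\pi_*(\mathcal{J}^{(i-1)})$, I would shift its support slightly earlier in time along $\gamma$: the leading-order correction in the shift parameter is obtained by bracketing the previously realized direction with the tangent field $\dot\lambda\in\mathcal{C}$, and therefore lies in $[\mathcal{C},\mathcal{J}^{(i-1)}]\subseteq\mathcal{J}^{(i)}$ modulo $\mathcal{J}^{(i-1)}$ by the definition~\eqref{geod_flag}. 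Iterating until $i=\nu(\lambda(T))$, past which the flag stabilizes by definition~\eqref{abn_ext_class} of $\nu$, completes the proof of~\eqref{diff_end_point_jac}. Since $V_1=\ker d\pi$ is contained in $\mathcal{J}\subseteq\mathcal{J}^{(\nu)}$, the projected subspace has dimension $\nu(\lambda(T))+2$, so
\[
\mathrm{corank}(\gamma)\;\le\;n-2-\nu(\lambda(T)).
\]

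Combining this inequality with $\mathrm{corank}(\gamma)\ge 1$ yields $\mathrm{corank}(\gamma)=1$ whenever $\nu(\lambda(T))=n-3$. The hypothesis $m(q_0)=n-3$ (guaranteed at generic points---and at every equiregular point---by Theorem~\ref{max_class_conjecture}) ensures that $\mathcal{R}_{n-3}\cap\pi^{-1}(q_0)$ is Zariski open and dense in $\pi^{-1}(q_0)=\mathbb{P}\bigl((D^2)^\perp(q_0)\setminus(D^3)^\perp(q_0)\bigr)$. Lower semicontinuity of $\nu$ together with maximality of the value $n-3$ makes $\mathcal{R}_{n-3}$ open in $\mathcal{M}$, so the integral curve of $\mathcal{C}$ through any $\lambda_0$ in that set stays in $\mathcal{R}_{n-3}$ for small times, producing a corank-$1$ trajectory starting at $q_0$. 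Since $\mathcal{C}$ is transversal to the vertical fibers of $\pi$, the assignment $\lambda_0\mapsto\pi\circ\lambda|_{[0,T]}$ (as an unparametrized curve) is injective, so the Zariski-open dense subset of maximal-class lifts corresponds exactly to the claimed Zariski-open dense set of corank-$1$ trajectories, completing the proof.

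The main obstacle is the inductive variational argument for~\eqref{diff_end_point_jac}: one must carefully translate families of $L^\infty$ control perturbations into the iterated brackets with $\mathcal{C}$ that define the flag $\mathcal{J}^{(i)}$, verifying that the leading-order effect of each time-shift step genuinely fills out $\pi_*(\mathcal{J}^{(i)}/\mathcal{J}^{(i-1)})$ rather than collapsing into previously attained directions. The remaining ingredients---Pontryagin's principle, the dimension count for corank, openness of $\mathcal{R}_{n-3}$ via lower semicontinuity of $\nu$, and transversality of $\mathcal{C}$ to the vertical fibers---are essentially formal once~\eqref{diff_end_point_jac} is established.
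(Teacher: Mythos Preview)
Your proposal is correct and follows essentially the same route as the paper: establish the inclusion $\pi_*\mathcal{J}^{(\nu(\lambda(T)))}\subseteq\operatorname{Im}(dF_{q_0,T})_\gamma$, read off the corank bound $n-2-\nu(\lambda(T))$, and then use maximality of class at $q_0$ together with openness of $\{\nu=n-3\}$ to produce a Zariski-open set of corank-$1$ trajectories. The only difference is in how the key inclusion is obtained: the paper first quotes from Agrachev--Sarychev the exact variational formula $\operatorname{Im}(dF_{q_0,T})_\gamma=\mathrm{span}\{d(\pi\circ e^{(T-t)X})\mathcal{J}(\Gamma(t)):0\le t\le T\}$ and then reads off the inclusion from Lie-derivative properties, whereas you sketch the same content directly via needle variations and time shifts---this is precisely the computation underlying the cited formulas, so nothing is lost.
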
\begin{proof}
First, consider a parametrized regular abnormal extremal $\Gamma\colon [0, T]\to \mathcal M$ and let $X$ be a vector field in a neighborhood of $\Gamma$ in $\mathcal M$ that generates the characteristic line distribution $\mathcal C$ and such that $\Gamma$ is an integral curve of $X$. As before, let $\pi:\mathcal M \to M$ be the canonical projection. Set $\gamma:=\pi(\Gamma)$ and let $e^{tX}$ be the flow generated by $X$. Then \cite[Section 4]{Agrachev-Sarychev}, more specifically, relations (4.6) and (4.7) there, imply the following relation between the differential of the endpoint map $F_{\gamma(0), T}$ at $\gamma$ and the lift $\mathcal J$ of the distribution $D$, defined by \eqref{J0}:

\begin{equation}
\label{diff_end_point_jac}
\operatorname{Im} \left( d_\gamma F_{q_0,T} \right)=\mathrm{span}\left\{ d(\pi\circ e^{(T-t)X})_{\Gamma(t)}\mathcal J\left(\Gamma(t)\right): 0\leq t\leq T\right\}\subset d\pi_{\Gamma(T)}H\bigl(\Gamma(T)\bigr).
\end{equation}
The latter inclusion follows from the fact that $X$ generates the Cauchy characteristic distribution $\mathcal C$ of $H$ and it shows why $\gamma$ is an abnormal extremal trajectory in the sense of Definition \ref{abn_def}. 
From relation \eqref {diff_end_point_jac}, the definition of $\mathcal J^{(i)}$ as in \eqref{geod_flag}, and the properties of Lie derivatives, it follows that 
\begin{equation}
\label{class_corank_ineq_1}
d\pi_{\Gamma(T)}\mathcal J^{\bigl(\nu\bigl(\Gamma(T)\bigr)\bigr)}\bigl(\Gamma(T)\bigr)\subset \operatorname{Im} \left( d (F_{q_0,T})_\gamma \right), 
\end{equation}
where $\nu(\lambda)$ is defined by \eqref{abn_ext_class}\footnote{Note that in the real-analytic category the inclusion in \eqref{class_corank_ineq_1} is an equality.}. By construction $\dim\, \mathcal J^{(i)}(\lambda)=n-2+i$ for $0\leq i\leq \nu(\lambda)$ and the fiber of the bundle $\mathcal M$ is $(n-4)$-dimensional we have that $\dim\, d\pi_{\lambda} \mathcal J^{(i)}(\lambda)=i+2$ in this range of $i$. Consequently, \eqref{class_corank_ineq_1} implies that 
%\begin{equation}
%\label{class_corank_ineq_2}
$\dim\, \operatorname{Im} \left( d_\gamma F_{q_0,T} \right)\geq \nu\bigl(\Gamma(T)\bigr)+2$. 
%\end{equation}
Hence,
\begin{equation}
\label{class_corank_ineq_3}
\mathrm{corank}(\gamma)\leq n-2-\nu\bigl(\Gamma(T)\bigr). 
\end{equation}
In particular, if $\nu\bigl(\Gamma(T)\bigr)=n-3$, (i.e., if $\nu$ takes its maximal value), then $\mathrm{corank}(\gamma)\leq 1$. On the other hand, $\gamma$ is an abnormal extremal in the sense of Definition \ref{abn_def}, $\mathrm{corank}(\gamma)>0$, so we conclude that $\mathrm{corank}(\gamma)=1$ in this case. If we assume that $\nu\bigl(\Gamma(0)\bigr) = n-3$, then we can take sufficiently small $T$ so that $\nu\big(\Gamma(T)\bigr)=n-3$, so the corresponding $\gamma$ has corank 1. This proves our theorem, because if $m(q_0) = n - 3$, then the set of points $\lambda$ in the fiber $\pi^{-1}(q_0)$ such that $\nu(\lambda) = n - 3$ is a nonempty Zariski open subset, so, by the above arguments, the projection of a sufficiently small segment of a regular abnormal extremal starting at such a $\lambda$ will have corank 1.
\end{proof}
\begin{remark}
\label{q_0_nonmax_class_rem}
Finally, note that if instead of assuming that the class of the distribution $D$ at $q_0$ is maximal, we assume that there exists a regular abnormal extremal starting at $q_0$ such that the class of the distribution at its endpoint is maximal, then—by the same arguments as in the proof of the previous theorem—the corank of this abnormal extremal trajectory is equal to $1$.
However, we do not know how to exclude the possibility that any regular abnormal extremal trajectory starting at a point where the class of the distribution $D$ is not maximal remains entirely within the locus of points with the same property. Therefore we do not know yet how to remove the assumption on the starting point $q_0$ in Theorems \ref{corank1_abnormal_thm} and \ref{corank1_abnormal_thm_reg}. Perhaps the fact that all points in this locus are points where the distribution $D$ is not equiregular can be used in some way.
\end{remark}

\bibliographystyle{plain}
\bibliography{Bibliography.bib}

\end{document}